\def\NAT@def@citea{\def\@citea{\NAT@separator}}
\theoremstyle{plain}
\newtheorem{theorem}{Theorem}[section]
\newtheorem{lemma}[theorem]{Lemma}
\newtheorem{proposition}[theorem]{Proposition}
\theoremstyle{definition}
\newtheorem{definition}[theorem]{Definition}
\newtheorem{example}[theorem]{Example}
\theoremstyle{remark}
\newtheorem{remark}{Remark}
\newtheorem*{assumption*}{\assumptionnumber}
\providecommand{\assumptionnumber}{}
\newenvironment{assumption}[2]
{%
	\renewcommand{\assumptionnumber}{Assumption #1#2}%
	\begin{assumption*}%
		\protected@edef\@currentlabel{#1#2}%
	}
	{%
	\end{assumption*}
}
\begin{document}

\title{Solution Properties of Convex Sweeping Processes with Velocity Constraints}
\author{
	\name{N.~N. Thieu\textsuperscript{$\dagger$,$\ddagger$}\thanks{CONTACT N.~N. Thieu. Email: nang-thieu.nguyen@unilim.fr; nguyennangthieu@gmail.com}}
	\affil{\textsuperscript{$\dagger$}Laboratoire XLIM, Universit\'e de Limoges, 87060 Limoges, France; \textsuperscript{$\ddagger$}Institute of Mathematics, Vietnam Academy of Science and Technology, Hanoi, Vietnam	}
}

\maketitle
	
\begin{abstract}
	Some properties of solutions of convex sweeping processes with velocity constraints are studied in this paper. Namely, the solution sensitivity with respect to the initial value, the boundedness, the closedness, and the convexity of the solution set are discussed in detail. Our investigations complement the preceding ones on the solution existence and the solution uniqueness of convex sweeping processes with velocity constraints.
\end{abstract}	
	
\begin{keywords}
		Sweeping process; velocity constraint; solution property; Lipschitz-like property; inner continuity; Bochner integration; Sobolev space
\end{keywords}

\begin{amscode}
	49J40  $\cdot$ 47J20 $\cdot$ 47J22  $\cdot$ 58E35  $\cdot$ 34G25
\end{amscode}

\section{Introduction}\label{section_SP_intro}

Sweeping processes with velocity constraints, which are nontrivial generalizations of certain evolution variational inequalities, were studied firstly by Siddiqi and Manchada~\cite{siddiqimanchanda2002}. Since these models have various applications in mechanics, physics, and engineering (see~\cite[p.~8]{aht} and~\cite[Section~6.4]{duvautlions1976}), several forms of such processes have been considered in the literature; see~\cite{aht,ak18,adlyhaddad2020, ATY_2021,bounkhel2007,JouraniVilches2019,vilchesnguyen2020}. 

In this paper, we study the form of sweeping processes with velocity constraints proposed by Adly, Haddad and Thibault~\cite{aht}, which is defined as follows. Suppose that~${\mathcal H}$ is a real Hilbert space, $T$ a positive real number, and $C:[0,T]\rightrightarrows {\mathcal H}$ a set-valued map having nonempty closed convex values. Let $A_0,A_1:{\mathcal H}\to {\mathcal H}$ be positive semidefinite, bounded, symmetric linear operators and $f:[0,T]\to {\mathcal H}$ be a continuous mapping. Consider the following differential inclusion, which is called sweeping process with velocity constraint:
\begin{equation}\label{main}
\left\{
\begin{array}{l}
A_1 \dot{u}(t)+A_0 u(t)-f(t) \in -{\mathcal N}_{C(t)}(\dot{u}(t))\quad \text{a.e.} \; t \in [0,T],\\ 
u(0)=u_0,
\end{array}\right. \tag{\text{P}}
\end{equation}
where ${\mathcal N}_{C(t)}(\dot{u}(t))$ is the normal cone to $C(t)$ at $\dot{u}(t)$ in the sense of convex analysis. An \textit{absolutely continuous} function $u:[0,T]\to{\mathcal H}$ is said to be a \textit{solution} of~\eqref{main} if it satisfies the conditions stated in the formulation of  problem~\eqref{main}. Since the Hilbert space $\mathcal{H}$ has the Radon-Nikod\'ym property, the Fr\'echet derivative $\dot{u}(t)$ of $u$ exists for almost every $t\in [0,T]$  (see Section~\ref{section_SP_Preliminaries} below for some relevant references).

For sweeping processes with velocity in a moving bounded convex set in separable Hilbert spaces, Adly, Haddad and Thibault~\cite{aht} have established sufficient conditions for the solution existence and the solution uniqueness.  Later, Adly and Le~\cite{ak18} have generalized the solution existence result of~\cite{aht} to the case where the moving set can be unbounded and the operator $A_1$ is semicoercive. An application to non-regular electrical circuits was given in~\cite{aht, ak18}. In a subsequent paper, by weakening the continuity condition of the moving constraint set, Vilches and Nguyen~\cite[Section~5]{vilchesnguyen2020} have obtained a refinement of the corresponding result of~\cite{ak18}. For implicit sweeping processes of a general type, Jourani and Vilches~\cite{JouraniVilches2019} have proved the solution existence and uniqueness by using the concept of quasistatic evolution variational inequalities from~\cite{adlyhaddad2018}. 

Relaxing the convexity of the constraint sets, Bounkhel~\cite{bounkhel2007} have obtained the solution existence and the solution uniqueness for~\eqref{main}, provided that ${\mathcal N}_{C(t)}(\dot{u}(t))$ is replaced by the proximal normal cone ${\mathcal N}^P_{C(t)}(\dot{u}(t))$, $A_0\equiv 0$, $A_1$ is the identity operator, and $C(t)$ are uniformly prox-regular for all $t\in [0,T]$. Recently, by using a result of Yen~\cite{Yen_AMO95} on the solution sensitivity of  parametric variational inequalities, Adly, Thieu and Yen~\cite{ATY_2021} have investigated the problem~\eqref{main} in the case where the set-valued mapping $t\mapsto C(t)$, $t\in [0, T]$, is locally Lipschitz-like. The authors have also established several solution existence results for the case where $C(t)$ is a finite union of disjoint convex sets. More comments and remarks on the solution existence and solution uniqueness of~\eqref{main} in both convex and nonconvex cases can be found in~\cite{ATY_2021}. 

Our aim is to study some fundamental properties of the solutions of~\eqref{main}. When the problem has a unique solution, it is of interest to study the continuity of the solution with respect to the initial value $u_0$. We prove that if the sufficient conditions for the solution existence and uniqueness either in~\cite{aht} or in~\cite{ATY_2021} are satisfied, then the solution is Lipschitz continuous on the initial value. Then, we show that the solution set is bounded if some assumptions used in~\cite{aht,ak18,ATY_2021} are fulfilled. The solution set is not always closed in the space of continuous vector-valued functions. However, it is a closed subset in an appropriate space. Two sets of sufficient conditions for the convexity of the solution set are obtained. Interestingly, a sharp outer estimate for the solution set can be established. It is worthy to stress that the just-mentioned properties of the solutions of~\eqref{main} are investigated here for the first time. To the best of our knowledge, analogous results are not available in the literature.

The paper is organized as follows. Some preliminaries, including a lemma on a relation between strong convergence of sequence of functions in $L^1([0,T],{\mathcal H})$ and its pointwise convergence, are presented in Section~\ref{section_SP_Preliminaries}. The solution sensitivity with respect to the initial value is addressed in Section~\ref{section_SP_solutionsensitivity}. Three theorems on the boundedness of the solution set are proved in Section~\ref{section_SP_boundedness}. We establish in Section~\ref{section_SP_closedness} the closedness of the solution set of~\eqref{main} in the Sobolev space $W^{1,1}([0,T],{\mathcal H})$. Section~\ref{section_SP_convexity} is devoted to the convexity of the solution set, an outer estimate for the set, and two interesting open questions. The obtained results are summarized in the last section.

\section{Preliminaries}\label{section_SP_Preliminaries}

Throughout this paper, let $\mathcal{H}$ be a real Hilbert space equipped with the norm $ \Vert \cdot \Vert $ and the scalar product $\langle \cdot,\cdot \rangle$. The open ball (resp., closed ball)  in $ \mathcal{H} $ with center~$x$ and radius $r>0$ is denoted by $ \mathbb{B}(x, r) $ (resp., $\bar{\mathbb{B}}(x, r)$). The distance from $x$ to $\Omega$ is $d(x,\Omega):= \inf\limits_{y\in \Omega}\Vert x-y\Vert.$ The \textit{projection} of a point $x\in \mathcal{H}$ onto $\Omega$ is defined by $\mathbb{P}_\Omega(x) = \big\{y\in \Omega \mid d(x,\Omega)=\Vert x-y \Vert\big\}.$ 
The \textit{Hausdorff distance} between nonempty subsets $\Omega_1$, $\Omega_2$ of $\mathcal{H}$ is defined by the formula
$$d_H(\Omega_1, \Omega_2)=\max\left\{ \sup\limits_{x\in \Omega_1}d(x,\Omega_2) , \;\; \sup\limits_{y\in \Omega_2}d(y,\Omega_1)    \right\}.$$ For a convex set $\Omega\subset \mathcal{H}$, the normal cone to  $\Omega$ at $x\in \mathcal{H}$ in the sense of convex analysis is ${\mathcal N}_{\Omega}(x):=\{x^*\in\mathcal{H}\mid \langle x^*,y-x\rangle\leq 0, \ \forall y\in\Omega\}$ if $x\in\Omega$ and $\emptyset$ if $x\notin\Omega$.

By $\mathbb N$ we denote the set of positive integers. The notation $[a,b]$ (resp., $(a,b)$) stands for a closed interval (resp., an open interval) in the real line $\mathbb R$. The Banach space of continuous functions defined on $[a,b]$ with values in ${\mathcal H}$ is denoted by $\mathcal{C}^0([a,b],{\mathcal H})$. Here, $\Vert x \Vert_{\mathcal{C}^0} = \max\limits_{t\in[a,b]} \Vert x(t)\Vert$. 

\begin{definition}{\rm
		A function $x:[a,b]\to\mathcal{H}$ is said to be \textit{absolutely continuous} on $[a,b]$ if for every $\varepsilon >0$ there is a $\delta >0$ such that $\sum_{k=1}^n \Vert x(b_k)-x(a_k)\Vert <\varepsilon$
		for any finite system of pairwise disjoint subintervals $(a_k,b_k)\subset[a,b]$ of total length $\sum_{k=1}^n(b_k-a_k)$ less than $\delta$.}
\end{definition}

Any absolutely continuous function $u:[0,T]\to\mathcal{H}$ is Fr\'echet differentiable almost everywhere on $[0,T]$ with respect to the Lebesgue measure of the segment; see, for example,~\cite[Corollary~13 of Chapter 3, Theorem~2 on p.~107, and Section 6 of Chapter VII]{diestel1977} or~\cite[Corollary~5.12 and Theorem~5.21]{benyamini1998}. 

\begin{definition}\label{def_Lipschitz-likeness}{\rm (See \cite[Definition~1.40]{Mordukhovich_2006} and \cite[Definition~3.1]{Mordukhovich_2018})
		One says that a set-valued mapping $K:\Lambda\rightrightarrows\mathcal{H}$, where $\Lambda$ is a metric space, is \textit{Lipschitz-like} around a point $(\bar{\lambda},\bar{x})$ in its \textit{graph}, which is the set $\{(\lambda,x)\in\Lambda\times {\mathcal H}\mid x\in K(\lambda)\},$ if there exist a neighborhood $V$ of $\bar\lambda$, a neighborhood $W$ of $\bar{x}$ and a constant $\kappa > 0$ such that
		\begin{equation*}
		K(\lambda)\cap W \subset K(\lambda')+\kappa d(\lambda,\lambda')\bar{\mathbb{B}}(0,1), \quad \forall \lambda,\lambda'\in V.
		\end{equation*}}
\end{definition}

\begin{definition}
	{\rm A linear operator $A:{\mathcal H}\to{\mathcal H}$ is \textit{coercive} if there exists a positive constant $c$ such that \begin{equation}\label{coercivity}\langle Ax,x\rangle \geq c\Vert x\Vert^2\quad \forall x\in {\mathcal H}.
	\end{equation}}
\end{definition}

If there is $c>0$ such that~\eqref{coercivity} holds, then $c\Vert x\Vert^2\leq \langle Ax,x\rangle \leq \Vert A\Vert \Vert x\Vert^2$ for all $x\in {\mathcal H}$. Thus, we must have $c\leq \Vert A\Vert$, provided that ${\mathcal H}\neq\{0\}$. Let $A$ be bounded and coercive. Set \begin{equation}\label{modulus_coercivity}\bar{c}=\sup \left\{c\in\mathbb{R}_+\mid  \langle Ax,x\rangle \geq c\Vert x\Vert^2\ \;\forall  x\in {\mathcal H}\right\}.
\end{equation} By the definition of supremum, there exists a sequence $\{c_k\}\subset\mathbb{R}_+$ satisfying the inequality $\langle Ax,x\rangle \geq c_k\Vert x\Vert^2$ for all $x\in {\mathcal H}$ and $c_k\to\bar{c}$ as $k\to\infty$. Hence, one has $\langle Ax,x\rangle \geq \bar{c}\Vert x\Vert^2$ for all $x\in\mathcal{H}$. So, $\bar{c}\in(0,\Vert A\Vert]$. For a bounded coercive linear operator $A:{\mathcal H}\to{\mathcal H}$, the constant $\bar{c}$ defined by~\eqref{modulus_coercivity} is called the \textit{modulus of coercivity} of $A$.

We now recall the definition of Bochner integral.

\begin{definition}\label{Bochner_integral}	{\rm (See \cite[pp.~44--45]{diestel1977}) Let $(\Omega, \Sigma, \mu)$ be a finite measurable space and $X$ be a Banach space. A $\mu$-measurable function $f:\Omega\to X$ is called \textit{Bochner integrable} if there is a sequence of simple functions $\{f_k\}$ such that
		$\displaystyle\lim\limits_{k\to\infty}\int_\Omega \Vert f_k(\omega)-f(\omega)\Vert_X d\mu =0.$
		In this case, $\displaystyle\int_E f(\omega)d\mu$ is defined for each $E\in\Sigma$ by $\displaystyle\int_E f(\omega)d\mu=\lim\limits_{k\to\infty}\displaystyle\int_E f_k(\omega)d\mu$, where $\displaystyle\int_E f_k(\omega)d\mu$ is defined in an obvious way.}
\end{definition}

As noted in~\cite[p.~45]{diestel1977}, the limit in Definition~\ref{Bochner_integral} exists and is independent of the defining sequence $\{f_k\}$. According to \cite[Theorem~2, p.~45]{diestel1977}, a $\mu$-measurable function $f:\Omega\to X$ is Bochner integrable if and only if $\displaystyle\int_\Omega \Vert f(\omega)\Vert_X d\mu <\infty.$ 

If $u:[0,T]\to\mathcal{H}$ is an absolutely continuous function, then the function $\dot{u}(\cdot)$ is Bochner integrable on $[0,T]$ (see the proof of \cite[Theorem~2, p.~107]{diestel1977} for detailed explanations).

For every $p\in [1,\infty)$, the Bochner space $L^p(\Omega, X)$ consists of all $\mu$-measurable functions $f:\Omega\to X$ satisfying $$\|f\|_p=\left(\int_\Omega \Vert f(\omega)\Vert^p_X d\mu \right)^{1/p}<\infty$$ (see, e.g., \cite[pp.~49--50]{diestel1977}). The space $L^p(\Omega, X)$ for any $1\leq p<\infty$ is a Banach space and the set of simple functions is dense in $L^p(\Omega, X)$ (see, e.g., \cite[p.~97]{diestel1977}). 

The following lemma gives a relation between strong convergence of sequence of functions in $L^1([0,T],{\mathcal H})$ and its pointwise convergence.

\begin{lemma}\label{stronglp_pointwise}
	Let $\{x_n\}$ be a sequence in $L^1([0,T],{\mathcal H})$ and let $x\in L^1([0,T],{\mathcal H})$ be such that $x_n$ converges strongly to $x$ in $L^1([0,T],{\mathcal H})$. Then, there exists a subsequence $\{x_{n_k}\}$ of $\{x_n\}$ such that $x_{n_k}(t)$ converges to $x(t)$ almost everywhere on $[0,T]$.
\end{lemma}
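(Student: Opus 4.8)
The plan is to adapt the classical Riesz argument from the scalar case, working with the norm functions instead. First I would use the strong convergence $\|x_n-x\|_1\to 0$ to extract, by induction on $k$, a subsequence $\{x_{n_k}\}$ of $\{x_n\}$ satisfying $\|x_{n_k}-x\|_1\le 2^{-k}$ for every $k\in\mathbb{N}$; this is possible since the tail of $\{\|x_n-x\|_1\}$ is eventually below any prescribed positive threshold. Then I would introduce the nonnegative scalar functions $g_k:[0,T]\to[0,\infty)$ defined by $g_k(t):=\|x_{n_k}(t)-x(t)\|$. Each $g_k$ is Lebesgue measurable on $[0,T]$: indeed $x_{n_k}$ and $x$ are $\mu$-measurable, hence strongly (Bochner) measurable, so $x_{n_k}-x$ is strongly measurable and therefore $t\mapsto\|x_{n_k}(t)-x(t)\|$ is measurable as a real-valued function.

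Next I would consider the partial sums $G_N:=\sum_{k=1}^N g_k$, which form a nondecreasing sequence of nonnegative measurable functions, and their pointwise limit $G:=\sum_{k=1}^\infty g_k$ with values in $[0,\infty]$. By the monotone convergence theorem and the choice of the subsequence,
\[
\int_0^T G(t)\,dt=\lim_{N\to\infty}\int_0^T G_N(t)\,dt=\lim_{N\to\infty}\sum_{k=1}^N\|x_{n_k}-x\|_1\le\sum_{k=1}^\infty 2^{-k}=1<\infty.
\]
In particular $G(t)<\infty$ for almost every $t\in[0,T]$, that is, the numerical series $\sum_{k=1}^\infty\|x_{n_k}(t)-x(t)\|$ converges for a.e.\ $t$. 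Consequently its general term tends to $0$ for a.e.\ $t$, which is exactly the assertion that $x_{n_k}(t)\to x(t)$ almost everywhere on $[0,T]$.

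I do not expect a genuine obstacle in this argument; the whole point is that the proof of the scalar $L^1$ statement transfers verbatim once one replaces absolute values by norms. The only two places requiring a word of justification are the measurability of the scalar functions $g_k$ in the vector-valued setting—supplied by strong measurability of Bochner-integrable (indeed $\mu$-measurable) functions—and the interchange of summation and integration, which is legitimate here because the $G_N$ are nonnegative and the monotone convergence theorem applies.
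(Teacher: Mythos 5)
Your argument is correct, and it is a genuinely more direct variant of what the paper does. Both proofs rest on the same Riesz-type device (pass to a rapidly convergent subsequence and control a series of norm functions via the monotone convergence theorem), but you sum the distances to the known limit, $\sum_k \Vert x_{n_k}(t)-x(t)\Vert$, after choosing $\Vert x_{n_k}-x\Vert_1\le 2^{-k}$, whereas the paper works with the consecutive differences $\sum_k \Vert x_{n_{k+1}}(t)-x_{n_k}(t)\Vert$ after extracting a subsequence with $\Vert x_{n_{k+1}}-x_{n_k}\Vert_1\le 2^{-k}$. The paper's choice forces it to first show that $\{x_{n_k}(t)\}$ is Cauchy in $\mathcal H$ for a.e.\ $t$, name the pointwise limit $\tilde x$, verify $\tilde x\in L^1$, and then invoke the dominated convergence theorem together with uniqueness of limits to identify $\tilde x$ with $x$; this is essentially the completeness-of-$L^1$ argument, which would work even if the limit were not known in advance. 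Your version exploits the hypothesis that the limit $x$ is already given, so the finiteness a.e.\ of $\sum_k\Vert x_{n_k}(t)-x(t)\Vert$ immediately forces the general term to vanish, and the entire identification step disappears. Your two points requiring justification (measurability of $t\mapsto\Vert x_{n_k}(t)-x(t)\Vert$ via strong measurability, and the interchange of sum and integral via monotone convergence) are exactly the right ones and are handled correctly, so nothing is missing.
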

\begin{proof}
	Since $\{x_n\}$ is a strongly convergent sequence, it is a Cauchy sequence. Hence, for every positive integer $k$ we can find a positive integer $n_k$ such that 
	$$\Vert x_{m} -x_{q}\Vert_{L^1} \leq \frac{1}{2^k}\quad (\forall m\geq n_k, \forall q\geq n_k).$$ 
	Without loss of generality we may assume that $n_{k_1}<n_{k_2}$ whenever $k_1<k_2$. Clearly, the above choice of $\{n_k\}$ implies that $\{x_{n_k}\}$ is a subsequence of  $\{x_n\}$ having the property
	\begin{equation}\label{special_subsequence} \Vert x_{n_{k+1}} -x_{n_k}\Vert_{L^1} \leq \frac{1}{2^k} \quad \forall k \geq 1.
	\end{equation}
	Define 
	\begin{equation}\label{y_m} y_m(t)= \sum\limits_{k=1}^m \Vert x_{n_{k+1}}(t)-x_{n_k}(t)\Vert.
	\end{equation} For all $t\in [0,T]$, by~\eqref{y_m} and~\eqref{special_subsequence} we have
	\begin{equation*}
	\vert y_m(t) \vert= \sum\limits_{k=1}^m \Vert x_{n_{k+1}}(t)-x_{n_k}(t)\Vert\leq \sum\limits_{k=1}^m \frac{1}{2^k} \leq 1.
	\end{equation*} 
	Thus, $\vert y_m(t) \vert\leq 1$  for every $t\in [0,T]$. Since $x_n\in L^1([0,T],{\mathcal H})$ is measurable for all $n\in\mathbb{N}$, the function $y_m:[0,T]\to\mathbb R$ is also measurable for all $m\in\mathbb{N}$. As $\{y_m\}$ is an increasing sequence of real-valued functions, by the monotone convergence theorem~\cite[Theorem~4.1]{brezis2011} one can assert that $y_m(t)$ converges to a function $y(t)$ almost everywhere on $[0,T]$. Since $\vert y(t)\vert \leq 1$ for all $t\in[0,T]$, we see that $y\in L^1([0,T],\mathbb{R})$. On the other hand, for $i>j\geq 2$, we have
	\begin{equation}\label{special_estimates} \Vert x_{n_i}(t)-x_{n_j}(t)\Vert \leq \Vert x_{n_i}(t)-x_{n_{i-1}}(t)\Vert+\ldots+\Vert x_{n_{j+1}}(t)-x_{n_j}(t)\Vert \leq y(t)-y_{n_{j-1}}(t).\end{equation}
	It follows that, for almost every $t\in[0,T]$, $\{x_{n_k}(t)\}$ is a Cauchy sequence in ${\mathcal H}$ and it converges to a finite limit, say, $\tilde{x}(t)$. From~\eqref{special_estimates}, letting $i$ tend to infinity, we obtain 
	$$\Vert \tilde{x}(t)-x_{n_j}(t)\Vert \leq y(t)-y_{n_{j-1}}(t)\leq y(t)$$
	for almost every $t\in[0,T]$ and for any $j \geq 2$. 
	Hence, one has $\tilde{x}\in L^1([0,T],{\mathcal H})$. Since $\Vert x_{n_k}(t) -\tilde{x}(t)\Vert^2\to 0$ and $\Vert x_{n_k}(t)-\tilde{x}(t)\Vert\leq y(t)$ almost everywhere on $[0,T]$, using the dominated convergence theorem~\cite[Theorem~3, p.~45]{diestel1977}, we can deduce that $\Vert x_{n_k}-\tilde{x}\Vert_1 \to 0$. Since $x_n$ converges strongly to $x$ in $L^1([0,T],{\mathcal H})$ and $L^1([0,T],{\mathcal H})$ is a subspace of $L^1([0,T],{\mathcal H})$,  $x_n$ converges strongly to $x$ in $L^1([0,T],{\mathcal H})$. By the uniqueness of the limit, we have $\tilde{x}=x$. Therefore, we have shown that $x_{n_k}(t)$ converges to $x(t)$ almost everywhere on $[0,T]$.
	
	The proof is complete.
\end{proof}

\begin{remark} {\rm In the formulation of Lemma~\ref{stronglp_pointwise}, one can replace $L^1([0,T],{\mathcal H})$ by any Bochner space $L^p(\Omega,X)$ with $1\leq p<\infty$. The proof remains the same, provided that one writes $L^p(\Omega,X)$ instead of $L^1([0,T],{\mathcal H})$ and $L^p([0,T],\mathbb{R})$ instead of $L^1([0,T],\mathbb{R})$.}
\end{remark}

For more details on Bochner integration, we refer to~\cite[p.~132]{yosida1980},~\cite[Chapter~II]{diestel1977},~\cite[Section~1.4]{cazenave_haraux_1998}, and~\cite[p.~116]{brezis2011}.

Now, we recall the definition and some properties of Sobolev spaces of vector-valued functions. Let $\Omega$ be an open subset of $\mathbb{R}$ and $X$ be a Banach space. The space $ L^1_{\rm loc}(\Omega,X)$ of \textit{locally integrable} functions is defined as follows:
$$L^1_{\rm loc}(\Omega,X) :=\left\{ f:\Omega\to X\mid \int_K \Vert f(\tau)\Vert d\tau <\infty, \ \forall K\subset\Omega, \ K \ \text{is\ compact}\right\}.$$

\begin{definition}
	Let $f\in L^p(\Omega,X)$, where $p\in [1,\infty)$, a function $\tilde{f}\in L^1_{\rm loc}(\Omega,X)$ is said to be a \textit{weak derivative} of $f$ if 
	$$\int_\Omega  \dot{g}(\tau) f(\tau)d\tau=-\int_\Omega g(\tau)\tilde{f}(\tau) d\tau,$$
	for all $g\in C^\infty_0(\Omega)$, where $C^\infty_0(\Omega)$ the space of all real-valued functions that are infinitely differentiable and have compact support in $\Omega$.
\end{definition}

The weak derivative of $f\in L^p(\Omega,X)$ is uniquely defined up to a set of measure zero (see~\cite[Proposition~23.18]{zeidler_2A_1990}). 

\begin{definition}\label{def_sobolev_space}\rm {(See, e.g,~\cite{cazenave_haraux_1998})
	Let $p\in[1,+\infty)$, $\Omega$ be an open subset of $\mathbb{R}$, and $X$ be a Banach space. The \textit{Sobolev space} $W^{1,p}(\Omega,X)$ is the set of all functions $f\in L^p(\Omega,X)$ that admit a weak derivative on $\Omega$ satisfying $\dot{f}\in L^p(\Omega,X)$. This space is equipped with the norm
 $$\Vert f \Vert_{W^{1,p}} = \left(\int_\Omega\Vert f\Vert^p d\mu \right)^\frac{1}{p}+\left(\int_\Omega\Vert \dot{f}\Vert^p d\mu \right)^\frac{1}{p}.$$}
\end{definition}

From the above definition, we see that if a sequence $\{f_k\}$ converges strongly to $f$ in $W^{1,p}(\Omega,X)$, then $f_k$ (resp., $\dot{f}_k$) converges strongly to $f$ (resp., $\dot{f}$) in $L^p(\Omega,X)$. It is well known \cite[Proposition~1.4.34]{cazenave_haraux_1998} that $W^{1,p}(\Omega,X)$ is a Banach space for all $p\in[1,+\infty)$.

\begin{proposition}\label{sobolev_w11}{\rm (See~\cite[Theorem~1.4.35]{cazenave_haraux_1998})}
	Let $p\in[1,\infty)$ and $x\in L^p(\Omega, X)$. The following conditions are equivalent
	\begin{itemize}
		\item[{\rm (a)}] $x\in W^{1,p}(\Omega,X)$.
		\item[{\rm (b)}] $x$ is absolutely continuous, differentiable almost everywhere and $\dot{x}\in L^p(\Omega, X)$.
		\item[{\rm (c)}] there exists a function $y\in L^p(\Omega, X)$ such that for almost every $t_0,t\in\Omega$, one has $$x(t)=x(t_0)+\displaystyle\int_{t_0}^t y(\tau)d\tau.$$
	\end{itemize}
\end{proposition}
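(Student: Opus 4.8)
\noindent The plan is to prove the three conditions equivalent through the cycle $(a)\Rightarrow(c)\Rightarrow(b)\Rightarrow(a)$, reducing at every step to the classical scalar results on the real line and working, as always, on one connected component of $\Omega$ at a time. The recurring reduction device is \emph{Pettis's measurability theorem}: a Bochner-measurable (in particular, Bochner-integrable) map has an essentially separable range, so one may assume $X$ separable whenever convenient.

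For $(a)\Rightarrow(c)$, let $y\in L^p(\Omega,X)$ be the weak derivative of $x$, which exists and lies in $L^p$ by Definition~\ref{def_sobolev_space}, and fix a bounded open interval $I$ with $\overline I\subset\Omega$. I would mollify: with a standard mollifier $(\rho_\varepsilon)_{\varepsilon>0}$ on $\mathbb R$, put $x_\varepsilon(t)=\int\rho_\varepsilon(t-s)x(s)\,ds$ and $y_\varepsilon(t)=\int\rho_\varepsilon(t-s)y(s)\,ds$ (Bochner integrals, $t\in I$, $\varepsilon$ small enough that the integration takes place inside $\Omega$). Then $x_\varepsilon\in C^\infty(\overline I,X)$; testing the definition of the weak derivative against the function $s\mapsto\rho_\varepsilon(t-s)\in C_0^\infty(\Omega)$ shows $\dot x_\varepsilon=y_\varepsilon$ on $I$; and $x_\varepsilon\to x$, $y_\varepsilon\to y$ in $L^p(I,X)$ by the standard mollifier approximation, valid verbatim in the Bochner setting. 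Since $x_\varepsilon$ is smooth, the fundamental theorem of calculus gives $x_\varepsilon(t)=x_\varepsilon(t_0)+\int_{t_0}^t y_\varepsilon(\tau)\,d\tau$ for all $t_0,t\in\overline I$. Now I would invoke Lemma~\ref{stronglp_pointwise} in the $L^p$ form of the Remark following it: along a suitable subsequence $\varepsilon_k\downarrow0$ one has $x_{\varepsilon_k}(t)\to x(t)$ for almost every $t\in I$, while $y_{\varepsilon_k}\to y$ in $L^1(I,X)$ (as $I$ is bounded) forces $\int_{t_0}^t y_{\varepsilon_k}(\tau)\,d\tau\to\int_{t_0}^t y(\tau)\,d\tau$ uniformly in $t_0,t$. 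Passing to the limit yields $x(t)=x(t_0)+\int_{t_0}^t y(\tau)\,d\tau$ for almost every $t_0,t\in I$; letting $I$ exhaust the component containing it and repeating on each component gives $(c)$. (Alternatively one could avoid mollification by checking via Fubini that $t\mapsto\int_{t_0}^t y(\tau)\,d\tau$ has weak derivative $y$, so that $x$ differs from it by a function with vanishing weak derivative, which is a.e.\ constant on each component by the du~Bois-Reymond lemma applied to $\phi\circ(\cdot)$ for $\phi\in X^*$.)

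For $(c)\Rightarrow(b)$, choose $t_0$ for which the formula holds and replace $x$ by the representative $\widetilde x(t)=x(t_0)+\int_{t_0}^t y(\tau)\,d\tau$, which agrees with $x$ almost everywhere. Absolute continuity of $\widetilde x$ on every compact subinterval of $\Omega$ follows from the absolute continuity of the Lebesgue integral of $\Vert y(\cdot)\Vert\in L^1_{\rm loc}(\Omega,\mathbb R)$, since $\sum_k\Vert\widetilde x(b_k)-\widetilde x(a_k)\Vert\le\int_{\bigcup_k(a_k,b_k)}\Vert y(\tau)\Vert\,d\tau$ for disjoint subintervals $(a_k,b_k)$. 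That $\widetilde x$ is Fr\'echet differentiable almost everywhere with $\dot{\widetilde x}=y$ is the Lebesgue differentiation theorem for the Bochner integral: at every Lebesgue point $t$ of $y$ one has $\big\Vert h^{-1}(\widetilde x(t+h)-\widetilde x(t))-y(t)\big\Vert\le h^{-1}\int_t^{t+h}\Vert y(\tau)-y(t)\Vert\,d\tau\to0$, and such points form a full-measure set --- reduce to separable range via Pettis, pick a countable dense subset $\{v_i\}\subset X$, apply the scalar Lebesgue point theorem to each $\tau\mapsto\Vert y(\tau)-v_i\Vert\in L^1_{\rm loc}$, and intersect the exceptional sets. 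Hence $\dot{\widetilde x}=y\in L^p(\Omega,X)$, which is $(b)$.

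Finally, $(b)\Rightarrow(a)$ uses only the scalar fundamental theorem of calculus together with Fubini's theorem for Bochner integrals. For every $\phi\in X^*$ the real function $\phi\circ x$ is absolutely continuous, so $\phi(x(t))-\phi(x(t_0))=\int_{t_0}^t(\phi\circ x)'(\tau)\,d\tau$; at almost every $\tau$ --- namely where the Fr\'echet derivative $\dot x(\tau)$ exists --- the chain rule gives $(\phi\circ x)'(\tau)=\phi(\dot x(\tau))$, so, commuting $\phi$ with the Bochner integral, $\phi\big(x(t)-x(t_0)\big)=\phi\big(\int_{t_0}^t\dot x(\tau)\,d\tau\big)$; as $X^*$ separates the points of $X$, this yields $x(t)=x(t_0)+\int_{t_0}^t\dot x(\tau)\,d\tau$. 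Substituting this into $\int_\Omega\dot g(t)x(t)\,dt$ for an arbitrary $g\in C_0^\infty(\Omega)$ and applying Fubini produces $\int_\Omega\dot g(t)x(t)\,dt=-\int_\Omega g(t)\dot x(t)\,dt$, so $\dot x\in L^p(\Omega,X)$ is the weak derivative of $x$ and $x\in W^{1,p}(\Omega,X)$. The step I expect to be the genuine work is making the Lebesgue-differentiation argument of $(c)\Rightarrow(b)$ airtight for a general, possibly non-separable $X$; the remedy is Pettis's theorem, which lets one treat every Bochner-integrable map as separably valued and thereby reduces each density, differentiation, and Fubini claim to its classical real-variable counterpart.
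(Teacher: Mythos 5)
The paper offers no proof of this proposition at all---it is imported verbatim from \cite[Theorem~1.4.35]{cazenave_haraux_1998}---so there is nothing internal to compare your argument against. Your cycle $(a)\Rightarrow(c)\Rightarrow(b)\Rightarrow(a)$ is the standard and correct proof (mollification plus the $L^p$-to-a.e.\ extraction of Lemma~\ref{stronglp_pointwise} for $(a)\Rightarrow(c)$, Lebesgue points of the Bochner integral after a Pettis reduction to separable range for $(c)\Rightarrow(b)$, and scalarization by $X^*$ plus Fubini for $(b)\Rightarrow(a)$), and it matches the approach of the cited source; the only points deserving an explicit word are the diagonal extraction when the bounded intervals $I$ exhaust a component in $(a)\Rightarrow(c)$, and the (automatic) strong measurability of $\dot x$ as an a.e.\ limit of difference quotients in $(b)\Rightarrow(a)$.
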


\begin{remark} {\rm For $\Omega =(0,T)$, if $x:\Omega\to {\mathcal H}$ is an absolutely continuous function, then it is a simple matter to prove that the limits $\lim\limits_{t\to 0^+}x(t)$ and $\lim\limits_{t\to T^-}x(t)$ exist. So, setting $x(0)=\lim\limits_{t\to 0^+}x(t)$ and $x(T)=\lim\limits_{t\to T^-}x(t)$ gives an absolutely continuous function defined on $[0,T]$. Therefore, by Proposition~\ref{sobolev_w11} one can identify the Sobolev space $W^{1,1}(\Omega,X)$, where $\Omega =(0,T)$, with the space of absolutely continuous functions $u:[0,T]\to {\mathcal H}$ equipped with the norm
		\begin{equation}\label{norm_in_W1,1}\Vert u \Vert_{W^{1,1}} = \displaystyle\int_0^T\Vert u(\tau)\Vert d\tau +\displaystyle\int_0^T\Vert \dot{u}(\tau)\Vert d\tau.
		\end{equation} We use this identification and write $W^{1,1}([0,T],{\mathcal H})$ for $W^{1,1}((0,T),{\mathcal H})$.}
\end{remark}

 Throughout this paper, $A_0,A_1:{\mathcal H}\to {\mathcal H}$ are  positive semi-definite, bounded symmetric linear operators and $f:[0,T]\to {\mathcal H}$ is a continuous mapping. We denote by ${\rm Sol}(P,u_0)$ the solution set of~\eqref{main} with the initial value $u_0$. Before investigating the solution properties for problem~\eqref{main}, we present some assumptions that were used in preceding works~\cite{aht,ak18,ATY_2021}. 
\begin{assumption}{(H}{1)}
	\label{closedconvexassumption}
	\textit{The constraint sets $C(t)$, $t\in [0,T]$, are nonempty, closed, and convex. }
\end{assumption}

\begin{assumption}{(H}{1a)}
	\label{convexassumption}
	\textit{The constraint sets $C(t)$, $t\in [0,T]$, are nonempty and convex. }
\end{assumption}

\begin{assumption}{(H}{2a)}
	\label{continuousassumption}
	\textit{The set-valued mapping $C$ is continuous in the Hausdorff distance sense, i.e., there exists a continuous function $g:[0,T]\to \mathbb{R}$ such that
	\begin{equation}
	\label{continuousset}
	d_H(C(s),C(t))\leq   \vert g(s)-g(t)\vert\, \quad \forall s,t \in [0,T].
	\end{equation}}
\end{assumption}

\begin{assumption}{(H}{2b)}
	\label{lipschitz_like_assumption} 
\textit{$C$ is Lipschitz-like around every point in its graph.}
\end{assumption}

\begin{assumption}{(H}{3a)}
	\label{boundedness_assumption} 
	\textit{The constraint set $C(0)$ is bounded.}
\end{assumption}

\begin{assumption}{(H}{3b)}
	\label{coercive_assumption} 
	\textit{There exist positive constants $c_1, c_2$  such that
	\begin{equation*}
	\langle A_1x,x\rangle \geq c_1\Vert x\Vert^{2}-c_2,\ \;\forall x\in C(0).
	\end{equation*} }
\end{assumption}

\begin{assumption}{(H}{3c)}
	\label{coercive_assumption_1} 
	\textit{There exist positive constants $c_1, c_2$  such that
	\begin{equation*}
		\langle A_1x,x\rangle \geq c_1\Vert x\Vert^{2}-c_2,\ \;\forall t\in [0,T],\;\forall x\in C(t).
	\end{equation*} }
\end{assumption}

 First, we recall some solution existence and solution uniqueness results of~\eqref{main}.

\begin{theorem}\label{AHT_theorem} {\rm (See \cite[Theorems~5.1]{aht})} Suppose that~$\mathcal{H}$ is separable. If~{\rm\ref{closedconvexassumption}},~{\rm \ref{continuousassumption}}, and~{\rm \ref{boundedness_assumption}} are satisfied, then~\eqref{main} has at least one Lipschitz solution. 
\end{theorem}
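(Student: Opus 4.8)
The plan is to establish this classical result (due to Adly, Haddad and Thibault) by Moreau's catching-up algorithm, applied to the velocity variable $v=\dot u$. Recall that a solution of~\eqref{main} is an absolutely continuous $u$ with $u(0)=u_0$, $\dot u(t)\in C(t)$ a.e., and
\begin{equation*}
\big\langle A_1\dot u(t)+A_0u(t)-f(t),\,w-\dot u(t)\big\rangle\ge 0\qquad\forall w\in C(t),\ \text{a.e. }t\in[0,T].
\end{equation*}
First I would record a uniform a priori bound: by~{\rm\ref{boundedness_assumption}} there is $r_0>0$ with $C(0)\subset\bar{\mathbb B}(0,r_0)$, and by~{\rm\ref{continuousassumption}} together with continuity of $g$ on $[0,T]$ one has $d_H(C(t),C(0))\le G:=\max_{t\in[0,T]}|g(t)-g(0)|<\infty$; hence $C(t)\subset\bar{\mathbb B}(0,R)$ for all $t$, with $R:=r_0+G+1$. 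In particular every solution of~\eqref{main} is automatically $R$-Lipschitz (since $\dot u(t)\in C(t)$ a.e.), so it suffices to produce one.

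Then I would set up the discrete scheme: for $n\in\mathbb N$ put $t_i^n=iT/n$ ($0\le i\le n$), $u_0^n=u_0$, and inductively let $v_i^n$ be a minimizer over the nonempty, closed, bounded---hence weakly compact---convex set $C(t_{i+1}^n)$ of the convex, strongly continuous (therefore weakly lower semicontinuous) functional $w\mapsto\tfrac12\langle A_1w,w\rangle+\langle A_0u_i^n-f(t_{i+1}^n),w\rangle$, and set $u_{i+1}^n=u_i^n+(t_{i+1}^n-t_i^n)v_i^n$. Existence of the minimizer is the Weierstrass argument on a weakly compact set, and its optimality condition reads $A_1v_i^n+A_0u_i^n-f(t_{i+1}^n)\in-{\mathcal N}_{C(t_{i+1}^n)}(v_i^n)$. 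Since $v_i^n\in C(t_{i+1}^n)\subset\bar{\mathbb B}(0,R)$, one gets $\|v_i^n\|\le R$ and $\|u_i^n\|\le\|u_0\|+RT$ uniformly in $n,i$. Introducing the piecewise-linear interpolant $u_n$, the piecewise-constant velocity $v_n(t)=v_i^n$, and the delayed maps $\theta_n(t)=t_{i+1}^n$, $\bar u_n(t)=u_i^n$ for $t\in(t_i^n,t_{i+1}^n]$, one has $\dot u_n=v_n$ a.e., $v_n(t)\in C(\theta_n(t))$, the family $\{u_n\}$ equi-$R$-Lipschitz and uniformly bounded, and $\|\bar u_n(t)-u_n(t)\|\le RT/n$. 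By Arzel\`a--Ascoli a subsequence (not relabeled) satisfies $u_n\to u$ uniformly with $u$ $R$-Lipschitz, hence also $\bar u_n\to u$ uniformly; as $\{v_n\}$ is bounded in $L^2([0,T],{\mathcal H})$, along a further subsequence $v_n\rightharpoonup v$ weakly in $L^2$. Passing to the limit in $u_n(t)=u_0+\int_0^tv_n(s)\,ds$ gives $u(t)=u_0+\int_0^tv(s)\,ds$, so $v=\dot u$ a.e. by Proposition~\ref{sobolev_w11}.

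The hard part will be passing to the limit in the constraint and in the variational inequality, since $v_n$ converges only weakly. For the constraint, $d(v_n(t),C(t))\le d_H(C(\theta_n(t)),C(t))\le|g(\theta_n(t))-g(t)|\to0$ uniformly in $t$ by uniform continuity of $g$, so the $L^2$-distance from $v_n$ to the closed convex set $\mathcal K:=\{w\in L^2([0,T],{\mathcal H}):w(t)\in C(t)\ \text{a.e.}\}$ tends to $0$, and $\mathcal K$ being weakly closed yields $v\in\mathcal K$, i.e. $\dot u(t)\in C(t)$ a.e. (equivalently, use Mazur's lemma and Lemma~\ref{stronglp_pointwise}). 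For the variational inequality, fix a bounded measurable selection $w(\cdot)$ of $C(\cdot)$ and a nonnegative $\varphi\in L^\infty([0,T],\mathbb R)$; taking $w_n(t)\in\mathbb P_{C(\theta_n(t))}(w(t))$ one gets $w_n(t)\in C(\theta_n(t))$ and $w_n\to w$ in $L^2$, so the discrete optimality condition, tested against $\varphi$, gives
\begin{equation*}
\int_0^T\varphi(t)\big\langle A_1v_n(t)+A_0\bar u_n(t)-f(\theta_n(t)),\,w_n(t)\big\rangle\,dt\ \ge\ \int_0^T\varphi(t)\big\langle A_1v_n(t)+A_0\bar u_n(t)-f(\theta_n(t)),\,v_n(t)\big\rangle\,dt.
\end{equation*}
On the left, weak--strong products ($A_1v_n\rightharpoonup A_1v$ in $L^2$; $w_n\to w$, $A_0\bar u_n-f\circ\theta_n\to A_0u-f$ strongly in $L^2$) pass to the limit; on the right, the mixed term passes similarly, while for the quadratic term $w\mapsto\int_0^T\varphi\langle A_1w,w\rangle$ is convex and strongly continuous on $L^2$---here positive semidefiniteness of $A_1$ and $\varphi\ge0$ are essential---hence weakly lower semicontinuous, so its $\liminf$ dominates the value at $v$. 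One thus obtains $\int_0^T\varphi(t)\langle A_1\dot u(t)+A_0u(t)-f(t),w(t)-\dot u(t)\rangle\,dt\ge0$ for all such $\varphi$ and $w$; since $\mathcal H$ is separable, running this over a countable Castaing representation of $C(\cdot)$ and localizing in $t$ yields $\langle A_1\dot u(t)+A_0u(t)-f(t),w-\dot u(t)\rangle\ge0$ for all $w\in C(t)$, a.e. $t$, that is, $A_1\dot u(t)+A_0u(t)-f(t)\in-{\mathcal N}_{C(t)}(\dot u(t))$ a.e. With $u(0)=u_0$, $u$ is the desired Lipschitz solution. I expect the main obstacle to be precisely this limit passage in the quadratic term---the lack of strong convergence of $v_n$ must be absorbed by a lower-semicontinuity (monotonicity) argument---while separability of $\mathcal H$ is needed for the measurable-selection and catching-up steps and is therefore intrinsic to this approach.
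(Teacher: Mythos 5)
This statement is quoted in the paper with a citation to \cite[Theorem~5.1]{aht}; the paper itself contains no proof, so the only meaningful comparison is with the argument of the cited source, and your catching-up scheme with an implicit step in the velocity is indeed the standard route there. Most of your outline is sound: the existence and optimality condition for the discrete minimizers, the uniform bound $C(t)\subset\bar{\mathbb B}(0,R)$, the weak $L^2$-compactness of $\{v_n\}$, the passage to the limit in the constraint via the Hausdorff continuity, and the treatment of the quadratic term $\int\varphi\langle A_1 v_n,v_n\rangle$ by weak lower semicontinuity are all correct.

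There is, however, one genuine gap: the invocation of Arzel\`a--Ascoli to get $u_n\to u$ \emph{uniformly in norm}. In an infinite-dimensional $\mathcal H$, equi-Lipschitzness plus uniform boundedness does not give pointwise relative compactness, so Arzel\`a--Ascoli does not apply. (Take $C(t)=\bar{\mathbb B}(0,1)$ in $\ell^2$ and $v_i^n=e_n$: then $u_n(t)=u_0+te_n$ converges only weakly.) All you can extract is $u_n(t)\rightharpoonup u(t):=u_0+\int_0^t v$ for each $t$, by testing $v_n\rightharpoonup v$ against $\chi_{[0,t]}\otimes h$. This breaks the subsequent step where you claim $A_0\bar u_n-f\circ\theta_n\to A_0u-f$ \emph{strongly} in $L^2$ and then pass to the limit in $\int_0^T\varphi\,\langle A_0\bar u_n,v_n\rangle\,dt$ as a weak--strong product: with only weak convergence of $\bar u_n$, this is a weak--weak pairing and need not converge. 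The standard repair is to exploit the gradient structure of the $A_0$-term rather than strong convergence: take $\varphi=\chi_{[0,t]}$, replace $\bar u_n$ by $u_n$ (the discrepancy is $O(1/n)$ in $L^\infty$ against $v_n$ bounded in $L^2$, hence negligible), write
\begin{equation*}
\int_0^t\langle A_0u_n(\tau),\dot u_n(\tau)\rangle\,d\tau=\tfrac12\bigl[\langle A_0u_n(t),u_n(t)\rangle-\langle A_0u_0,u_0\rangle\bigr],
\end{equation*}
and use the weak lower semicontinuity of the convex continuous quadratic form $x\mapsto\langle A_0x,x\rangle$ together with $u_n(t)\rightharpoonup u(t)$ to get $\liminf_n\int_0^t\langle A_0u_n,\dot u_n\rangle\ge\int_0^t\langle A_0u,\dot u\rangle$. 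Combined with the lower semicontinuity of the $A_1$-term this yields the integrated variational inequality on every $[0,t]$, and differentiation at Lebesgue points (using the separability and a Castaing representation of $C(\cdot)$, as you indicate) recovers the pointwise inclusion. With this replacement of the Arzel\`a--Ascoli/strong-product step, your proof goes through.
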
 

\begin{theorem}\label{AK_theorem} {\rm (See~\cite[Theorem 1]{ak18})} Suppose that~$\mathcal{H}$ is separable. If the assumptions~{\rm\ref{closedconvexassumption}},~{\rm \ref{continuousassumption}}, and~{\rm \ref{coercive_assumption}} are satisfied, then~\eqref{main} has at least one Lipschitz solution. 
\end{theorem}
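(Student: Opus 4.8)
The plan is to derive Theorem~\ref{AK_theorem} from Theorem~\ref{AHT_theorem} by truncating the (possibly unbounded) constraint sets: replace each $C(t)$ by its intersection with a large fixed ball, apply the bounded-case result of~\cite{aht} to the truncated problem, and then use the semicoercivity~\ref{coercive_assumption} to show that, once the ball is large enough, the truncation is inactive along the obtained solution, so that this solution also solves~\eqref{main}. As a preliminary step I would upgrade~\ref{coercive_assumption} to the uniform version~\ref{coercive_assumption_1}: given $x\in C(t)$, put $y=\mathbb{P}_{C(0)}(x)$, so that $\Vert x-y\Vert=d(x,C(0))\le d_H(C(t),C(0))\le\vert g(t)-g(0)\vert\le R$, where $R:=\max_{t\in[0,T]}\vert g(t)-g(0)\vert<\infty$ by continuity of $g$. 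Expanding $\langle A_1x,x\rangle$ around $y$, using positive semidefiniteness and boundedness of $A_1$, the inequality in~\ref{coercive_assumption}, and $\Vert y\Vert\ge\Vert x\Vert-R$, a short computation gives $\langle A_1x,x\rangle\ge c_1'\Vert x\Vert^2-c_2'$ for all $t\in[0,T]$ and all $x\in C(t)$, with new positive constants. Hence I may assume~\ref{coercive_assumption_1} from now on.

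Next I would carry out the truncation. Set $k_0:=d(0,C(0))+R$, and for $k>k_0$ and $t\in[0,T]$ define $C_k(t):=C(t)\cap\bar{\mathbb{B}}(0,k)$. Since $d(0,C(t))\le d(0,C(0))+\vert g(t)-g(0)\vert\le k_0<k$, each $C_k(t)$ is nonempty, closed and convex, so~\ref{closedconvexassumption} holds for $C_k$, and $C_k(0)$ is bounded, so~\ref{boundedness_assumption} holds for $C_k$. The one point requiring real work is that $C_k$ still satisfies~\ref{continuousassumption}. Because $d_H(C(s),C(t))\le\vert g(s)-g(t)\vert$, the family $\{C(t)\}$ factors through $g$, i.e.\ $C(t)=\widehat{C}(g(t))$ for a $1$-Lipschitz set-valued map $\widehat{C}$ on the compact interval $g([0,T])\subset\mathbb{R}$, and correspondingly $C_k(t)=\widehat{C}_k(g(t))$ with $\widehat{C}_k=\widehat{C}\cap\bar{\mathbb{B}}(0,k)$. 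Using the uniform strict feasibility of $\widehat{C}$ (every value contains a point of norm $\le k_0<k$), a segment/intermediate-value argument for convex sets shows that $\widehat{C}_k$ is $L$-Lipschitz on $g([0,T])$ for some $L=L(k,k_0)<\infty$; chaining along the interval makes the estimate global, so $d_H(C_k(s),C_k(t))\le L\vert g(s)-g(t)\vert=\vert\widetilde{g}(s)-\widetilde{g}(t)\vert$ with $\widetilde{g}:=Lg$ continuous, which is precisely~\ref{continuousassumption} for $C_k$.

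Now, for each $k>k_0$, Theorem~\ref{AHT_theorem} applied to the constraint map $C_k$ yields a Lipschitz function $u_k$ with $u_k(0)=u_0$ and $A_1\dot{u}_k(t)+A_0u_k(t)-f(t)\in-\mathcal{N}_{C_k(t)}(\dot{u}_k(t))$ for a.e.\ $t$. Rewriting this inclusion as the variational inequality $\dot{u}_k(t)\in C_k(t)$, $\langle A_1\dot{u}_k(t)+A_0u_k(t)-f(t),\,w-\dot{u}_k(t)\rangle\ge 0$ for all $w\in C_k(t)$, and testing it with $w=\mathbb{P}_{C(t)}(0)\in C_k(t)$ (a point of norm $\le k_0$), together with the uniform semicoercivity of Step~1 and the estimate $\Vert A_0u_k(t)\Vert\le\Vert A_0\Vert\big(\Vert u_0\Vert+\int_0^T\Vert\dot{u}_k(\tau)\Vert\,d\tau\big)$, one obtains a quadratic inequality in $\Vert\dot{u}_k(t)\Vert$ that gives $\Vert\dot{u}_k(t)\Vert\le C_4(1+\Vert u_k(t)\Vert)$ a.e., with $C_4$ independent of $k$. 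Since $\frac{d}{dt}\Vert u_k(t)\Vert^2=2\langle u_k(t),\dot{u}_k(t)\rangle\le 2\Vert u_k(t)\Vert\,\Vert\dot{u}_k(t)\Vert$ a.e., Gronwall's inequality yields a bound $\Vert u_k(t)\Vert\le R_1$ on $[0,T]$, uniform in $k>k_0$, and hence $\Vert\dot{u}_k(t)\Vert\le R_2:=C_4(1+R_1)$ a.e., also uniform in $k$.

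Finally, fix any $k_1>\max\{k_0,R_2\}$. For a.e.\ $t\in[0,T]$ we have $\dot{u}_{k_1}(t)\in C(t)$ with $\Vert\dot{u}_{k_1}(t)\Vert\le R_2<k_1$, so $\dot{u}_{k_1}(t)$ lies in the interior of $\bar{\mathbb{B}}(0,k_1)$; there $C_{k_1}(t)=C(t)\cap\bar{\mathbb{B}}(0,k_1)$ coincides with $C(t)$ on a neighbourhood of $\dot{u}_{k_1}(t)$, so $\mathcal{N}_{C_{k_1}(t)}(\dot{u}_{k_1}(t))=\mathcal{N}_{C(t)}(\dot{u}_{k_1}(t))$ and $u_{k_1}$ is a Lipschitz solution of~\eqref{main}. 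I expect the main obstacle to be Step~2, namely checking that the truncated map $C_k$ inherits the Hausdorff continuity~\ref{continuousassumption}: everything else is standard a priori estimation, whereas the quantitative stability of $C(t)\cap\bar{\mathbb{B}}(0,k)$ under perturbation of $C(t)$ genuinely uses the uniform strict feasibility to control how the bounding sphere cuts the moving set. (Alternatively, one could bypass Theorem~\ref{AHT_theorem} and prove the result directly by Moreau's catching-up algorithm, using Lemma~\ref{stronglp_pointwise} to pass from a weak $L^2$-limit of the discretized velocities to a pointwise a.e.\ limit lying in the constraint sets, and a Minty-type argument to pass to the limit in the inclusion; the semicoercivity then enters through the a priori bound on the discrete velocities.)
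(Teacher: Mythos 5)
The paper does not actually prove Theorem~\ref{AK_theorem}: it is recalled verbatim from \cite[Theorem~1]{ak18}, so there is no in-paper argument to compare yours against. That said, your truncation scheme is a sound and essentially complete way to derive the semicoercive case from the bounded case (Theorem~\ref{AHT_theorem}), and it is in the spirit of the original reference, which likewise reduces to the bounded-set result of \cite{aht} via uniform a priori velocity estimates. The two points you single out do work. Step~1 (upgrading \ref{coercive_assumption} to \ref{coercive_assumption_1} under \ref{continuousassumption}) is exactly \cite[Remark~3.2]{ATY_2021}, which the present paper itself invokes in the proof of Theorem~\ref{boundedness_sol_2}. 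Step~2, the only genuinely delicate point, follows from the standard quantitative stability of truncated convex sets: if $C$ and $D$ are closed convex sets each meeting $\bar{\mathbb{B}}(0,k_0)$ and $x\in C\cap\bar{\mathbb{B}}(0,k)$ with $k>k_0$, pick $y\in D$ with $\Vert x-y\Vert\le\varepsilon:=d_H(C,D)$ and, if $\Vert y\Vert>k$, slide along the segment from $y$ to a point $d_0\in D\cap\bar{\mathbb{B}}(0,k_0)$; the point $z=\lambda d_0+(1-\lambda)y$ with $\lambda=\varepsilon/(k+\varepsilon-k_0)$ lies in $D\cap\bar{\mathbb{B}}(0,k)$ and satisfies $\Vert x-z\Vert\le \frac{2(k+\varepsilon)}{k-k_0}\,\varepsilon$. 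Since here $\varepsilon\le 2\max_{[0,T]}\vert g\vert$ is bounded, this yields $d_H(C_k(s),C_k(t))\le L\vert g(s)-g(t)\vert$ with $L$ depending only on $k$, $k_0$ and $g$, i.e.\ \ref{continuousassumption} for $C_k$ with $\widetilde g=Lg$; the detour through the factorization $C=\widehat C\circ g$ is not needed. Your Step~3 reproduces the a priori estimate from the proof of Theorem~\ref{boundedness_sol_2} with constants independent of $k$ (the test point $\mathbb{P}_{C(t)}(0)$ has norm $d(0,C(t))\le k_0$ and hence lies in $C_k(t)$), and Step~4 is correct because at a point of $C(t)$ interior to $\bar{\mathbb{B}}(0,k_1)$ the normal cones to $C(t)$ and to $C(t)\cap\bar{\mathbb{B}}(0,k_1)$ coincide: for arbitrary $y\in C(t)$ one has $\dot u_{k_1}(t)+\tau(y-\dot u_{k_1}(t))\in C_{k_1}(t)$ for small $\tau>0$, so normality with respect to $C_{k_1}(t)$ forces normality with respect to $C(t)$. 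So the argument stands; only the routine computations behind these steps would need to be written out in full.
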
 

\begin{theorem}\label{ATY_theorem} {\rm (See~\cite[Theorem~3.3]{ATY_2021})} Let $A_0=0$, $A_1:\mathcal{H}\to \mathcal{H}$ be coercive. If the assumptions~{\rm\ref{closedconvexassumption}} and~{\rm \ref{lipschitz_like_assumption}} are satisfied, then~\eqref{main} has a unique solution $u$, which is a Lipschitz function. Moreover, the unique solution is a continuously differentiable function.
\end{theorem}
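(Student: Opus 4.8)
The plan is to exploit the special structure of~\eqref{main} when $A_0=0$: the left-hand side $A_1\dot u(t)-f(t)$ depends on the unknown only through its velocity, so the differential inclusion decouples into a one-parameter family of variational inequalities for $\dot u$. Writing $v=\dot u$, the relation $A_1\dot u(t)-f(t)\in-{\mathcal N}_{C(t)}(\dot u(t))$ is equivalent to $v(t)\in C(t)$ together with $\langle A_1 v(t)-f(t),\,y-v(t)\rangle\ge 0$ for all $y\in C(t)$; that is, $v(t)$ solves the variational inequality ${\rm VI}(C(t),A_1,f(t))$. Since $A_1$ is bounded, linear and coercive, it is Lipschitz continuous with constant $\Vert A_1\Vert$ and strongly monotone with constant $\bar c$ (its modulus of coercivity), so by the Lions--Stampacchia theorem ${\rm VI}(C(t),A_1,f(t))$ has, for each $t\in[0,T]$, exactly one solution, which I denote $v(t)$.

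The crux of the proof is to show that $t\mapsto v(t)$ is continuous on $[0,T]$. I would regard $v(t)$ as the value at the parameter $(t,f(t))$ of the solution map of the parametric variational inequality ${\rm VI}(C(t),A_1,w)$, $(t,w)\in[0,T]\times{\mathcal H}$, and apply the solution-sensitivity results of Yen~\cite{Yen_AMO95}: because $A_1$ is strongly monotone and Lipschitz and, by~{\rm\ref{lipschitz_like_assumption}}, $C$ is Lipschitz-like around every point of its graph, the (single-valued) solution map is locally Lipschitz in $(t,w)$ near each reference point; composing with the continuous map $t\mapsto(t,f(t))$ then yields continuity of $v$ at every $t_0\in[0,T]$ (indeed local Lipschitz continuity wherever $f$ is locally Lipschitz). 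One technical point deserves care: Lipschitz-likeness only supplies neighborhoods $V$ of $\bar t$ and $W$ of $v(\bar t)$, so one must first check that $v(t)\in W$ for $t$ near $t_0$ (this is part of what the sensitivity theorem provides) and then glue the local estimates using the compactness of $[0,T]$ to conclude $v\in\mathcal{C}^0([0,T],{\mathcal H})$.

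With the continuity of $v$ in hand, I would set $u(t):=u_0+\int_0^t v(\tau)\,d\tau$. Since $v$ is continuous on the compact interval $[0,T]$ it is bounded and Bochner integrable, so $u$ is well defined, Lipschitz continuous with constant $\max_{t\in[0,T]}\Vert v(t)\Vert$, continuously differentiable with $\dot u(t)=v(t)$ for every $t$, and $u(0)=u_0$; consequently $A_1\dot u(t)-f(t)=A_1 v(t)-f(t)\in-{\mathcal N}_{C(t)}(v(t))=-{\mathcal N}_{C(t)}(\dot u(t))$ for every $t$, so $u\in{\rm Sol}(P,u_0)$ and $u$ is of class $C^1$. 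For uniqueness, if $\tilde u$ is any solution of~\eqref{main}, then for almost every $t$ the vector $\dot{\tilde u}(t)$ solves ${\rm VI}(C(t),A_1,f(t))$, whose unique solution is $v(t)$; hence $\dot{\tilde u}(t)=v(t)$ a.e., and, $\tilde u$ being absolutely continuous, $\tilde u(t)=u_0+\int_0^t\dot{\tilde u}(\tau)\,d\tau=u(t)$ for all $t$.

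The main obstacle is the second step — converting the Lipschitz-like property of the moving set $C$ into continuity of the velocity $v$ — which rests on the parametric-VI sensitivity theorem and on verifying that its localized hypotheses can be glued along the compact interval $[0,T]$. The remaining steps amount to routine bookkeeping with the Bochner integral and to the uniqueness of solutions of a strongly monotone variational inequality.
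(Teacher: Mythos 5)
Theorem~\ref{ATY_theorem} is not proved in this paper: it is quoted from \cite{ATY_2021}, and the introduction describes the method used there as resting on Yen's sensitivity result \cite{Yen_AMO95} for parametric variational inequalities. Your argument --- decoupling the inclusion (since $A_0=0$) into the pointwise variational inequality for $v=\dot u$, getting a unique $v(t)$ from Lions--Stampacchia via the coercivity of $A_1$, deducing continuity of $t\mapsto v(t)$ from the Lipschitz-likeness of $C$ through the parametric-VI sensitivity theorem, and integrating --- is exactly that strategy, so your proposal matches the source's approach and is correct. One small inaccuracy worth noting: Yen's theorem yields local \emph{H\"older} (exponent $1/2$) rather than Lipschitz continuity of the solution map with respect to perturbations of the constraint set, but this does not affect the conclusion, since continuity of $v$ on the compact interval $[0,T]$ already gives both the $C^1$ property and the boundedness of $\dot u$ needed for $u$ to be Lipschitz.
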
 

In~\cite{aht,ATY_2021}, some conditions for the solution uniqueness of~\eqref{main}, which require the coerciveness of either $A_0$ or $A_1$, have been given.

\begin{theorem}\label{A_0_solution_uniqueness} {\rm (See~\cite[Theorem~5.2]{aht})}
	If $A_0$ is coercive and $C(t)$ is nonempty and convex for every $t\in [0,T]$,  then~\eqref{main} has at most one solution.
\end{theorem}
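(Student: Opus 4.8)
The plan is a standard monotonicity argument applied to the difference of two solutions. Suppose $u_1$ and $u_2$ are solutions of~\eqref{main} with the same initial value $u_0$, and put $w:=u_1-u_2$, so that $w$ is absolutely continuous, $w(0)=0$, and $\dot w(t)=\dot u_1(t)-\dot u_2(t)$ for almost every $t\in[0,T]$. First I would fix a point $t$ at which both inclusions in~\eqref{main} hold and both derivatives $\dot u_1(t),\dot u_2(t)$ exist; since $\dot u_i$ solves~\eqref{main}, one necessarily has $\dot u_i(t)\in C(t)$ a.e. for $i=1,2$ (otherwise the normal cone, hence the right-hand side of the inclusion, would be empty). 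Testing the inclusion for $u_1$ with the admissible vector $\dot u_2(t)\in C(t)$ and the inclusion for $u_2$ with $\dot u_1(t)\in C(t)$ gives
$$\langle A_1\dot u_1(t)+A_0u_1(t)-f(t),\,\dot u_2(t)-\dot u_1(t)\rangle\geq 0,\qquad \langle A_1\dot u_2(t)+A_0u_2(t)-f(t),\,\dot u_1(t)-\dot u_2(t)\rangle\geq 0.$$
Adding the two inequalities, the $f(t)$ terms cancel and one is left with $\langle A_1\dot w(t),\dot w(t)\rangle+\langle A_0w(t),\dot w(t)\rangle\leq 0$ for almost every $t\in[0,T]$.

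Since $A_1$ is positive semidefinite, the first summand is nonnegative, so $\langle A_0w(t),\dot w(t)\rangle\leq 0$ a.e. The next step is to identify this quantity with half the derivative of the scalar function $\varphi(t):=\langle A_0w(t),w(t)\rangle$. Because $w$ is absolutely continuous with Bochner-integrable derivative and $A_0$ is a bounded symmetric operator, $\varphi$ is absolutely continuous on $[0,T]$ and $\dot\varphi(t)=2\langle A_0w(t),\dot w(t)\rangle$ for almost every $t$; this product rule for Hilbert-space-valued absolutely continuous functions can be justified by writing $w(t)=\int_0^t\dot w(s)\,ds$ and expanding, or by approximating $\dot w$ in $L^1([0,T],{\mathcal H})$ by step functions. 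Hence $\dot\varphi\leq 0$ a.e., and therefore $\varphi$ is nonincreasing on $[0,T]$.

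To conclude, note that $\varphi(0)=\langle A_0w(0),w(0)\rangle=0$ because $w(0)=0$, so monotonicity gives $\varphi(t)\leq 0$ for every $t\in[0,T]$; on the other hand, the coercivity of $A_0$ (with modulus $\bar c>0$ as in~\eqref{modulus_coercivity}) yields $\varphi(t)=\langle A_0w(t),w(t)\rangle\geq\bar c\,\Vert w(t)\Vert^2\geq 0$. The two bounds together force $\Vert w(t)\Vert=0$ for all $t\in[0,T]$, that is, $u_1\equiv u_2$. I expect the only point requiring care to be the vector-valued differentiation identity for $\varphi$; everything else is elementary convex analysis, and one sees in passing that neither separability of ${\mathcal H}$ nor closedness of the sets $C(t)$ is used in this argument.
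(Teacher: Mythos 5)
Your argument is correct: it is the standard monotonicity argument (test each inclusion with the other solution's velocity, add, drop the nonnegative $A_1$ term, integrate $\langle A_0w,\dot w\rangle$ via the product rule, and invoke coercivity of $A_0$), and this is precisely the computation the paper itself carries out in the proof of Theorem~\ref{continuity_dependence_1} — the theorem in question is only cited there from the literature, but your proof coincides with that sensitivity argument specialized to $x_0=y_0$. The one step you flag as delicate, the identity $\tfrac{d}{dt}\langle A_0w(t),w(t)\rangle=2\langle A_0w(t),\dot w(t)\rangle$ a.e.\ together with the absolute continuity of $t\mapsto\langle A_0w(t),w(t)\rangle$, is handled in the paper by the Newton--Leibniz formula~\eqref{Newton_Lebnitz_formula} and is justified exactly as you suggest.
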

\begin{theorem}\label{A1_solution_uniqueness} {\rm (See~\cite[Theorem~3.4]{ATY_2021})}
	If $A_1$ is coercive and $C(t)$ is nonempty and convex for every $t\in [0,T]$, then~\eqref{main} has at most one solution.
\end{theorem}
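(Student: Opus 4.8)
The plan is to suppose that \eqref{main} has two solutions sharing the prescribed initial value and to derive a monotonicity inequality from the normal cone inclusion, which combined with the coercivity of $A_1$ forces the two solutions to coincide. First I would take $u_1,u_2\in{\rm Sol}(P,u_0)$ and set $w:=u_1-u_2$, so that $w$ is absolutely continuous on $[0,T]$, $w(0)=u_0-u_0=0$, and $\dot w=\dot u_1-\dot u_2$ for a.e.\ $t$.

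Next, fix $t\in[0,T]$ at which both inclusions in \eqref{main} hold. By the definition of the normal cone in the sense of convex analysis, the inclusion $-(A_1\dot u_1(t)+A_0u_1(t)-f(t))\in{\mathcal N}_{C(t)}(\dot u_1(t))$ is equivalent to $\dot u_1(t)\in C(t)$ together with
$$\big\langle A_1\dot u_1(t)+A_0u_1(t)-f(t),\,\dot u_1(t)-y\big\rangle\le 0\qquad\forall\, y\in C(t),$$
and similarly for $u_2$. Since $\dot u_2(t)\in C(t)$ and $\dot u_1(t)\in C(t)$, I would substitute $y=\dot u_2(t)$ into the inequality for $u_1$, substitute $y=\dot u_1(t)$ into the inequality for $u_2$, and add the two. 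The terms involving $f(t)$ cancel and, since the remaining left-hand side equals $\big\langle A_1(\dot u_1(t)-\dot u_2(t))+A_0(u_1(t)-u_2(t)),\,\dot u_1(t)-\dot u_2(t)\big\rangle$, this gives
$$\big\langle A_1\dot w(t),\,\dot w(t)\big\rangle+\big\langle A_0 w(t),\,\dot w(t)\big\rangle\le 0\qquad\text{for a.e.\ }t\in[0,T].$$

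Now I would invoke the coercivity of $A_1$: letting $c>0$ be a coercivity constant of $A_1$ (for instance its modulus of coercivity $\bar c$), one has $\langle A_1\dot w(t),\dot w(t)\rangle\ge c\,\|\dot w(t)\|^2$, hence
$$c\,\|\dot w(t)\|^2\le -\big\langle A_0 w(t),\,\dot w(t)\big\rangle\qquad\text{for a.e.\ }t\in[0,T].$$
The key remaining point is that, because $A_0$ is a bounded symmetric operator and $w$ is absolutely continuous with Bochner-integrable derivative, the scalar function $\varphi(t):=\langle A_0 w(t),w(t)\rangle$ is absolutely continuous on $[0,T]$ with $\varphi'(t)=2\langle A_0 w(t),\dot w(t)\rangle$ for a.e.\ $t$; this is obtained by expressing $\varphi(t)-\varphi(s)$ through $w(t)-w(s)=\int_s^t\dot w(\tau)\,d\tau$ and using the symmetry of $A_0$. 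Therefore $2c\,\|\dot w(t)\|^2\le -\varphi'(t)$ a.e., and integrating over $[0,s]$ yields
$$2c\int_0^s\|\dot w(t)\|^2\,dt\le -\big(\varphi(s)-\varphi(0)\big)=-\big\langle A_0 w(s),\,w(s)\big\rangle\le 0,$$
where I used $w(0)=0$ and the positive semidefiniteness of $A_0$. Hence $\dot w(t)=0$ for a.e.\ $t$, so $w$ is constant, and $w(0)=0$ forces $u_1\equiv u_2$, which is the claim.

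The main obstacle is the justification of the absolute continuity of $\varphi$ and of the identity $\varphi'(t)=2\langle A_0 w(t),\dot w(t)\rangle$ in the Hilbert-space-valued setting; the rest of the argument is an elementary manipulation of variational inequalities. This differentiation rule is standard — it is the vector-valued analogue of differentiating a quadratic form along an absolutely continuous path — and it can be proved directly (e.g.\ by first checking it when $\dot w$ is a simple function and then passing to the limit, using Proposition~\ref{sobolev_w11}), but it deserves to be spelled out carefully.
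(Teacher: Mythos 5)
Your argument is correct and is essentially the one the paper relies on: the statement itself is only cited from \cite[Theorem~3.4]{ATY_2021}, but the identical computation (add the two variational inequalities, use coercivity of $A_1$ to get $\alpha_1\Vert\dot x-\dot y\Vert^2\le-\langle A_0(x-y),\dot x-\dot y\rangle$, then integrate via the Newton--Leibniz formula \eqref{Newton_Lebnitz_formula} for $\langle A_0(x-y),x-y\rangle$) appears verbatim in the proof of Theorem~\ref{continuity_dependence_2}, and setting $x_0=y_0$ there yields $\dot x=\dot y$ a.e.\ exactly as you do. The only point worth spelling out, which the paper does note, is that the integrability of $t\mapsto\Vert\dot w(t)\Vert^2$ (a priori $\dot w$ is only in $L^1$) follows from its being dominated a.e.\ by the integrable function $t\mapsto-\langle A_0w(t),\dot w(t)\rangle/c$.
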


For a detailed discussion on the above assumptions and results, we refer to~\cite{ATY_2021}.
\section{Solution Sensitivity with respect to the Initial Value}\label{section_SP_solutionsensitivity}
In this section, we investigate the solution sensitivity of~\eqref{main} with respect to the initial value when the solution is unique. The following theorem takes account of the case where the operator $A_0$ is coercive.
\begin{theorem}\label{continuity_dependence_1} If the assumption~{\rm \ref{convexassumption}} is satisfied, ${\rm Sol}(P,u_0)$ is nonempty for every $u_0\in C(0)$, and $A_0$ is coercive with the modulus of coercivity $\alpha_0$, then the mapping $\varphi:C(0)\to \mathcal{C}^0([0,T],{\mathcal H})$, $u_0 \mapsto u(u_0,\cdot)$, where $u(u_0,\cdot)$ denotes the unique solution of~\eqref{main}, is Lipschitz continuous with the modulus $\sqrt{\frac{\Vert A_0\Vert}{\alpha_0}}$. 
\end{theorem}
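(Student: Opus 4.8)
The plan is to fix two initial values $u_0, v_0 \in C(0)$, let $u(t) := u(u_0,t)$ and $v(t) := u(v_0,t)$ be the corresponding unique solutions of~\eqref{main}, and estimate $\Vert u(t) - v(t) \Vert$ uniformly in $t$. The differential inclusion tells us that, for almost every $t$, $f(t) - A_0 u(t) - A_1 \dot u(t) \in {\mathcal N}_{C(t)}(\dot u(t))$ and $f(t) - A_0 v(t) - A_1 \dot v(t) \in {\mathcal N}_{C(t)}(\dot v(t))$, where both $\dot u(t), \dot v(t) \in C(t)$. The standard monotonicity trick for normal cones to a convex set: apply the first normal-cone inequality with the test point $\dot v(t) \in C(t)$ and the second with $\dot u(t) \in C(t)$, then add. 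This yields, for a.e. $t$,
\begin{equation*}
\langle (A_0 u(t) - A_0 v(t)) + (A_1 \dot u(t) - A_1 \dot v(t)),\ \dot u(t) - \dot v(t) \rangle \leq 0.
\end{equation*}
Since $A_1$ is positive semidefinite, $\langle A_1(\dot u - \dot v), \dot u - \dot v\rangle \geq 0$, so we can drop that term and obtain $\langle A_0(u(t) - v(t)), \dot u(t) - \dot v(t)\rangle \leq 0$ for a.e.\ $t$.

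Next I would set $w(t) := u(t) - v(t)$, which is absolutely continuous with $w(0) = u_0 - v_0$, and consider the scalar function $\psi(t) := \langle A_0 w(t), w(t)\rangle$. Because $A_0$ is bounded, symmetric and linear and $w$ is absolutely continuous with Bochner-integrable derivative, $\psi$ is absolutely continuous and $\dot\psi(t) = 2\langle A_0 w(t), \dot w(t)\rangle = 2\langle A_0 w(t), \dot u(t) - \dot v(t)\rangle \leq 0$ for a.e.\ $t$. Hence $\psi$ is nonincreasing on $[0,T]$, so $\psi(t) \leq \psi(0)$ for all $t$, i.e. $\langle A_0 w(t), w(t)\rangle \leq \langle A_0 w(0), w(0)\rangle$. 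Using coercivity of $A_0$ on the left and boundedness on the right,
\begin{equation*}
\alpha_0 \Vert w(t) \Vert^2 \leq \langle A_0 w(t), w(t)\rangle \leq \langle A_0 w(0), w(0)\rangle \leq \Vert A_0\Vert\, \Vert w(0)\Vert^2 = \Vert A_0\Vert\, \Vert u_0 - v_0\Vert^2,
\end{equation*}
so $\Vert w(t)\Vert \leq \sqrt{\Vert A_0\Vert / \alpha_0}\, \Vert u_0 - v_0\Vert$ for every $t \in [0,T]$. Taking the maximum over $t$ gives $\Vert \varphi(u_0) - \varphi(v_0)\Vert_{\mathcal{C}^0} \leq \sqrt{\Vert A_0\Vert / \alpha_0}\, \Vert u_0 - v_0\Vert$, which is the claimed Lipschitz estimate.

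The main point requiring care is the justification that $t \mapsto \langle A_0 w(t), w(t)\rangle$ is absolutely continuous with the product-rule derivative $2\langle A_0 w(t), \dot w(t)\rangle$ a.e.; this is where Bochner integration enters, since $w$ takes values in an infinite-dimensional Hilbert space. One argues it directly from the integral representation $w(t) = w(0) + \int_0^t \dot w(\tau)\,d\tau$ (Proposition~\ref{sobolev_w11}), the bilinearity and boundedness of $(x,y)\mapsto\langle A_0 x, y\rangle$, and symmetry of $A_0$; no genuinely new difficulty arises, but it should be stated. A minor secondary point is that $\varphi$ is well defined: by hypothesis ${\rm Sol}(P,u_0)$ is nonempty for each $u_0 \in C(0)$, and by Theorem~\ref{A_0_solution_uniqueness} (coercivity of $A_0$ plus convexity of the $C(t)$) it is a singleton, so $u(u_0,\cdot)$ makes sense. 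Everything else is the elementary normal-cone monotonicity computation above.
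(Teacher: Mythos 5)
Your proposal is correct and follows essentially the same route as the paper: the normal-cone monotonicity trick with test points $\dot v(t)$ and $\dot u(t)$, dropping the $A_1$ term by positive semidefiniteness, showing $t\mapsto\langle A_0(u(t)-v(t)),u(t)-v(t)\rangle$ is nonincreasing via the product rule for the absolutely continuous difference, and then sandwiching with coercivity and boundedness of $A_0$. The point you flag as needing care (absolute continuity of the quadratic form along the trajectory and the a.e.\ product-rule identity) is exactly what the paper handles by integrating the pointwise inequality and invoking a Newton--Leibniz formula for absolutely continuous functions.
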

\begin{proof}
Let $x_0, y_0\in C(0)$ be given arbitrarily. Then, by our assumptions and Theorem~\ref{A_0_solution_uniqueness}, the sweeping process~\eqref{main} has a unique solution $x(\cdot)$ with the initial value $x_0$ (resp., a unique solution $y(\cdot)$ with the initial value $y_0$). Since $C(t)$ is convex, the inclusion \begin{equation}\label{basic_inclusion} A_1 \dot{u}(t)+A_0 u(t)-f(t) \in -{\mathcal N}_{C(t)}(\dot{u}(t))
\end{equation} in the formulation of \eqref{main} can be rewritten equivalently as $$\langle A_1 \dot{u}(t)+A_0 u(t)-f(t),  \dot{u}(t)- z\rangle \leq 0\quad \forall z\in C(t).$$ As ${\mathcal N}_{C(t)}(\dot{u}(t))=\emptyset$ if $\dot{u}(t)\notin C(t)$, the fulfillment of~\eqref{basic_inclusion} for almost every $t\in [0,T]$ implies that $\dot{u}(t)\in C(t)$ for almost every $t\in [0,T]$. Hence, the inclusions $\dot{x}(t)\in C(t)$ and $\dot{y}(t)\in C(t)$ hold for almost every $t\in [0,T]$. So, we have 
	\begin{equation}\label{inequalities}
	\begin{cases}
	\langle A_1 \dot{x}(t)+A_0 x(t)-f(t),  \dot{x}(t)- \dot{y}(t)\rangle \leq 0,\\
	\langle -A_1 \dot{y}(t)-A_0 y(t)+f(t),  \dot{x}(t)- \dot{y}(t)\rangle \leq 0
	\end{cases}
	\end{equation}	for almost every $t\in[0,T]$. Adding the inequalities in \eqref{inequalities} side by side yields 
$$\langle A_1 (\dot{x}(t)- \dot{y}(t)),\dot{x}(t)- \dot{y}(t)\rangle + \langle A_0 (x(t)- y(t)),\dot{x}(t)- \dot{y}(t)\rangle \leq 0\quad \text{a.e.} \; t \in [0,T].$$
	Since $A_1$ is positive semi-definite, this implies that
		\begin{equation}\label{xy_estimate}
		\langle A_0 (x(t)- y(t)),  \dot{x}(t)- \dot{y}(t)\rangle \leq 0\quad \text{a.e.} \; t \in [0,T].	
		\end{equation}
Taking the Lebesgue integral on both sides of the inequality in~\eqref{xy_estimate} and applying~\cite[Remarks~11.23(c)]{rudinprinciples}, we obtain
		\begin{equation}\label{xy_estimate_1}
	\int_0^t\langle A_0 (x(\tau)- y(\tau)),  \dot{x}(\tau)- \dot{y}(\tau)\rangle d\tau\leq 0\quad (\forall t\in [0,T]).	
	\end{equation}
	As $\dfrac{d}{d\tau}\langle A_0(x(\tau)-y(\tau)),x(\tau)-y(\tau)\rangle = 2\langle A_0(x(\tau)-y(\tau)), \dot{x}(\tau) - \dot{y}(\tau)\rangle$ at every point $\tau$ where both derivatives $\dot{x}(\tau), \dot{y}(\tau)$ exist, by~\cite[Theorem~6, p.~340]{kolmogorov1975} one has 
\begin{equation}\label{Newton_Lebnitz_formula}\begin{array}{rl}
	\displaystyle\int_0^t\langle A_0( x(\tau)-y(\tau)), \dot{x}(\tau) - \dot{y}(\tau)\rangle d\tau & =\dfrac{1}{2}\big[\langle A_0 (x(t)-y(t)), x(t)- y(t)\rangle\\  &\quad - \langle A_0 (x(0)-y(0)), x(0)- y(0)\rangle \big].
	\end{array}\end{equation}
	Then, from~\eqref{xy_estimate_1} it follows that $\langle A_0 (x(t)- y(t)),  x(t)- y(t)\rangle-\langle A_0 (x_0-y_0), x_0- y_0\rangle\leq 0$.
	Hence, by the coerciveness of $A_0$, we get
	\begin{align*}
	\alpha_0\Vert x(t)- y(t)\Vert ^2 \leq\langle A_0 (x(t)-y(t)),  x(t)- y(t)\rangle&\leq \langle A_0 (x_0-y_0),  x_0- y_0\rangle&\\\leq \Vert A_0 \Vert \Vert x_0-y_0\Vert^2.
	\end{align*}
	Therefore, $\Vert x(t)- y(t)\Vert\leq \sqrt{\frac{\Vert A_0\Vert}{\alpha_0}}\Vert x_0-y_0\Vert$ for all  $t\in[0,T]$. So, the inequality $$\Vert x-y\Vert_{\mathcal{C}^0}\leq \sqrt{\frac{\Vert A_0\Vert}{\alpha_0}}\Vert x_0-y_0\Vert$$ holds for any $x_0,y_0\in C(0)$. We have thus proved that the mapping $\varphi$ is Lipschitz continuous on $C(0)$ with the modulus $\sqrt{\frac{\Vert A_0\Vert}{\alpha_0}}$.
\end{proof}

According to Theorem~\ref{A1_solution_uniqueness}, the nonemptiness and convexity of $C(t)$ together with the coerciveness of $A_1$ can also guarantee the solution uniqueness for~\eqref{main} if such a solution exists. A natural question arises: \textit{Could we get a similar result as the one in Theorem~\ref{continuity_dependence_1} for the case under consideration?} The next theorem gives a complete answer to this question.

\begin{theorem}\label{continuity_dependence_2}
	If the assumption~{\rm \ref{convexassumption}} is fulfilled, ${\rm Sol}(P,u_0)$ is nonempty for every $u_0\in C(0)$, and $A_1$ is coercive with the modulus of coercivity $\alpha_1$, then the mapping $\varphi:C(0)\to \mathcal{C}^0([0,T],{\mathcal H})$, $u_0 \mapsto u(u_0,\cdot)$, where $u(u_0,\cdot)$ denotes the unique solution of~\eqref{main}, is Lipschitz continuous with the modulus $\sqrt{\dfrac{T\Vert A_0\Vert}{2\alpha_1}}+1$. 
\end{theorem}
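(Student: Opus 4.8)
The plan is to follow the proof of Theorem~\ref{continuity_dependence_1}, interchanging the roles of $A_0$ and $A_1$, but replacing the pointwise estimate used there by an integral one. First I would fix $x_0,y_0\in C(0)$; by the assumption~{\rm\ref{convexassumption}}, the nonemptiness of ${\rm Sol}(P,u_0)$ for every $u_0\in C(0)$, and Theorem~\ref{A1_solution_uniqueness}, problem~\eqref{main} has a unique solution $x(\cdot)$ with $x(0)=x_0$ and a unique solution $y(\cdot)$ with $y(0)=y_0$. Arguing exactly as in the proof of Theorem~\ref{continuity_dependence_1}, the convexity of $C(t)$ allows~\eqref{basic_inclusion} to be rewritten as a variational inequality, whence $\dot x(t),\dot y(t)\in C(t)$ for a.e.\ $t\in[0,T]$; testing the inclusions for $x$ and for $y$ against $\dot x(t)-\dot y(t)$ and adding the two resulting inequalities gives, with $w:=x-y$,
\[
\langle A_1\dot w(t),\dot w(t)\rangle+\langle A_0 w(t),\dot w(t)\rangle\le 0\qquad\text{a.e.\ }t\in[0,T].
\]

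Next I would integrate this inequality over $[0,t]$ and treat the $A_0$-term by the Newton--Leibniz formula~\eqref{Newton_Lebnitz_formula} (valid by \cite[Theorem~6, p.~340]{kolmogorov1975}, since $w$ is absolutely continuous), obtaining
\[
\int_0^t\langle A_1\dot w(\tau),\dot w(\tau)\rangle\,d\tau+\tfrac12\langle A_0 w(t),w(t)\rangle\le\tfrac12\langle A_0 w(0),w(0)\rangle\le\tfrac12\Vert A_0\Vert\,\Vert x_0-y_0\Vert^2.
\]
This is where the present proof departs from that of Theorem~\ref{continuity_dependence_1}: since $A_0$ is only positive semi-definite, one cannot extract $\Vert w(t)\Vert$ from $\langle A_0 w(t),w(t)\rangle$, but one can simply discard this nonnegative term; invoking the coerciveness of $A_1$ then yields the velocity bound $\alpha_1\int_0^T\Vert\dot w(\tau)\Vert^2\,d\tau\le\tfrac12\Vert A_0\Vert\,\Vert x_0-y_0\Vert^2$. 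Finally, since $x$ and $y$ are Lipschitz, hence absolutely continuous, $w(t)=w(0)+\int_0^t\dot w(\tau)\,d\tau$ for all $t$, so by the Cauchy--Schwarz inequality and the previous bound,
\[
\Vert w(t)\Vert\le\Vert x_0-y_0\Vert+\sqrt{T}\Big(\int_0^T\Vert\dot w(\tau)\Vert^2\,d\tau\Big)^{1/2}\le\Big(1+\sqrt{\tfrac{T\Vert A_0\Vert}{2\alpha_1}}\Big)\Vert x_0-y_0\Vert
\]
for every $t\in[0,T]$. Taking the supremum over $t\in[0,T]$ gives $\Vert x-y\Vert_{\mathcal C^0}\le\big(\sqrt{T\Vert A_0\Vert/(2\alpha_1)}+1\big)\Vert x_0-y_0\Vert$, i.e.\ $\varphi$ is Lipschitz with modulus $\sqrt{T\Vert A_0\Vert/(2\alpha_1)}+1$.

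I expect the only genuinely delicate step to be the transition from $\langle A_1\dot w,\dot w\rangle+\langle A_0 w,\dot w\rangle\le0$ to a usable estimate. In Theorem~\ref{continuity_dependence_1} the coercive operator $A_0$ multiplies $w$ itself, which immediately produces a pointwise bound on $\Vert w(t)\Vert$; here the coercive operator $A_1$ multiplies the velocity $\dot w$, so one must first integrate, exploit that the $A_0$-contribution is an exact derivative whose endpoint value is nonnegative, and only afterwards convert the resulting $L^2$-bound on $\dot w$ into a uniform bound on $w$ via Hölder's inequality --- this last conversion is precisely what introduces the factor $\sqrt{T}$ and the additive constant $1$ in the Lipschitz modulus. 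The supporting facts (measurability and Bochner integrability of $\dot w$, the fundamental theorem of calculus, and the Newton--Leibniz formula for $\tau\mapsto\langle A_0 w(\tau),w(\tau)\rangle$) are routine and were already used in the proof of Theorem~\ref{continuity_dependence_1}.
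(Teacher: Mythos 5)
Your proposal is correct and follows essentially the same route as the paper's proof: add the two variational inequalities, exploit the coercivity of $A_1$ on the velocity term, integrate, handle the $A_0$-term via the Newton--Leibniz identity for $\tau\mapsto\langle A_0 w(\tau),w(\tau)\rangle$ and discard it by positive semidefiniteness, then pass from the resulting $L^2$-bound on $\dot w$ to a uniform bound on $w$ via H\"older's inequality. The only (immaterial) difference is that the paper applies the coercivity of $A_1$ pointwise before integrating, whereas you integrate first and then invoke coercivity; both orderings yield the same modulus $\sqrt{T\Vert A_0\Vert/(2\alpha_1)}+1$.
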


\begin{proof}
For any $x_0, y_0\in C(0)$, the assumptions made and Theorem~\ref{A1_solution_uniqueness} assure that~\eqref{main} has a unique solution $x(\cdot)$ (resp., $y(\cdot)$) with the initial value $x_0$ (resp., $y_0$).  Then, arguing similarly as in the proof of Theorem~\ref{continuity_dependence_1}, we have
$$\langle A_1 \dot{x}(t)+A_0 x(t)-f(t), \dot{x}(t) - \dot{y}(t)\rangle \leq 0$$
and
$$\langle A_1 \dot{y}(t)+A_0 y(t)-f(t), \dot{y}(t) - \dot{x}(t) \rangle \leq 0$$ for almost every $t\in[0,T]$. Adding the last inequalities side by side, one obtains
\begin{equation}\label{xy_estimate2}
\langle A_1 (\dot{x}(t)-\dot{y}(t)),\dot{x}(t) - \dot{y}(t)\rangle+\langle A_0(x(t)- y(t)), \dot{x}(t) - \dot{y}(t)\rangle \leq 0
\end{equation} for almost every $t\in[0,T]$. Combining the coerciveness of $A_0$ with~\eqref{xy_estimate2} yields
\begin{equation}\label{xy_estimate3}
\alpha_1\Vert \dot{x}(t)-\dot{y}(t)\Vert^2\leq -\langle A_0 (x(t)-y(t)), \dot{x}(t) - \dot{y}(t)\rangle \quad \text{a.e.} \; t \in [0,T].
\end{equation} Since the function $t\mapsto -\langle A_0 (x(t)-y(t)), \dot{x}(t) - \dot{y}(t)\rangle$ is integrable (in the Lebesgue sense), from~\eqref{xy_estimate3} we can deduce that the function $t\mapsto \alpha_1\Vert \dot{x}(t)-\dot{y}(t)\Vert^2$ is also integrable.
Integrating both sides of the inequality in~\eqref{xy_estimate3}, we obtain
\begin{equation}\label{xy_estimate4}\displaystyle\int_0^t\alpha_1\Vert \dot{x}(\tau)-\dot{y}(\tau)\Vert^2d\tau\leq -\displaystyle\int_0^t\langle A_0( x(\tau)-y(\tau)), \dot{x}(\tau) - \dot{y}(\tau)\rangle d\tau.	
\end{equation}
At every point $\tau$ where both derivatives $\dot{x}(\tau), \dot{y}(\tau)$ exist, we have $$\dfrac{d}{d\tau}\langle A_0(x(\tau)-y(\tau)),x(\tau)-y(\tau)\rangle = 2\langle A_0(x(\tau)-y(\tau)), \dot{x}(\tau) - \dot{y}(\tau)\rangle.$$ Hence, as noted in the preceding proof, by~\cite[Theorem~6, p.~340]{kolmogorov1975} we have \eqref{Newton_Lebnitz_formula}. Consequently, from~\eqref{xy_estimate4} it follows that
$$\displaystyle\int_0^t\alpha_1\Vert \dot{x}(\tau)-\dot{y}(\tau)\Vert^2d\tau\leq -\dfrac{1}{2}\big[\langle A_0 (x(t)-y(t)), x(t)- y(t)\rangle- \langle A_0 (x(0)-y(0)), x(0)- y(0)\rangle \big].$$
Since $A_0$ is positive semidefinite, the latter implies
$$\displaystyle\int_0^t\alpha_1\Vert \dot{x}(\tau)-\dot{y}(\tau)\Vert^2d\tau\leq \dfrac{1}{2} \langle A_0 (x(0)-y(0)), x(0)- y(0)\rangle \leq \dfrac{\Vert A_0\Vert}{2}\Vert x_0-y_0\Vert^2.$$
So, we have
\begin{equation}\label{xy_estimate5}
	\displaystyle\int_0^t\Vert \dot{x}(\tau)-\dot{y}(\tau)\Vert^2d\tau\leq \dfrac{\Vert A_0\Vert}{2\alpha_1}\Vert x_0-y_0\Vert^2.
\end{equation}
In addition, for each $t\in[0,T]$ one has
\begin{equation}\label{xy_estimate6}
	\begin{array}{rl}
	\Vert x(t)-y(t)\Vert &= \Big\Vert \left(x_0+	\displaystyle\int_0^t \dot{x}(\tau)d\tau \right)-\left( y_0+\displaystyle\int_0^t \dot{y}(\tau)d\tau \right)\Big\Vert\\
	&\leq \Vert x_0-y_0\Vert + 	\displaystyle\int_0^t\Vert \dot{x}(\tau)-\dot{y}(\tau)\Vert d\tau.
	\end{array}
\end{equation}
The inequality shows that the function $t\mapsto \Vert \dot{x}(t)-\dot{y}(t)\Vert$ belongs to the space $L^2([0,T],\mathbb R)$. Therefore, setting $\beta(t)=1$ for $t\in [0,T]$ and using the H\"older's inequality (see~\cite[Theorem~4.6]{brezis2011} and~\cite[p.~385]{kolmogorov1975}) for functions from $L^2([0,T],\mathbb R)$, we have $$\displaystyle\int_0^t(\beta(\tau)\Vert \dot{x}(\tau)-\dot{y}(\tau)\Vert) d\tau\leq \left(\displaystyle\int_0^t\beta(\tau)^2 d\tau\right)^\frac{1}{2}\left(\displaystyle\int_0^t\Vert \dot{x}(\tau)-\dot{y}(\tau)\Vert^2 d\tau\right)^\frac{1}{2}.$$
Then, combining this with~\eqref{xy_estimate5} yields
$$\displaystyle\int_0^t\Vert \dot{x}(\tau)-\dot{y}(\tau)\Vert d\tau\leq \sqrt{t} \sqrt{\dfrac{\Vert A_0\Vert}{2\alpha_1}}\Vert x_0-y_0\Vert \leq \sqrt{T} \sqrt{\dfrac{\Vert A_0\Vert}{2\alpha_1}}\Vert x_0-y_0\Vert$$ for every $t\in[0,T]$. Hence, thanks to~\eqref{xy_estimate6}, we get 
$$\Vert x(t)-y(t)\Vert\leq  \Vert x_0-y_0\Vert+\sqrt{\dfrac{T\Vert A_0\Vert}{2\alpha_1}}\Vert x_0-y_0\Vert = \left(\sqrt{\dfrac{T\Vert A_0\Vert}{2\alpha_1}}+1\right)\Vert x_0-y_0\Vert $$
for all $t\in[0,T]$. This implies that the mapping $\varphi$ defined in the statement of the theorem is Lipschitz continuous on $C(0)$ with the modulus $\sqrt{\dfrac{T\Vert A_0\Vert}{2\alpha_1}}+1$.
\end{proof}

\section{Boundedness of the Solution Set}\label{section_SP_boundedness}

Noting that the Sobolev space $W^{1,1}([0,T],{\mathcal H})$ is the space of all absolutely continuous functions with its derivative in $L^1([0,T],{\mathcal H})$ (see Proposition~\ref{sobolev_w11}), we can view the solution set of~\eqref{main} as a subset of $W^{1,1}([0,T],{\mathcal H})$. Of course, at the same time, it is a subset of $\mathcal{C}^0([0,T],{\mathcal H})$.

If~\eqref{main} has a unique solution then, under suitable conditions, we have established the solution sensitivity with respect to the initial value. When the solution uniqueness is not guaranteed, the solution set of~\eqref{main} may be unbounded. Let us consider an example.

\begin{example}{\rm Let ${\mathcal H}=\mathbb{R}^2$, $A_0=A_1=\begin{pmatrix} 0 & 0\\ 0 & 1 \end{pmatrix}$, $u_0=(0,0)$, $f(t) =(0,t)$, and $C(t)=\mathbb{R}\times\{0\}$ for all $t\in[0,T]$. For every $\lambda\in\mathbb{R}$, we define a function by setting $u^{(\lambda)}(t)=(\lambda t,0)$ for all $t\in [0,T]$. Clearly, $u^{(\lambda)}(0)=(0,0)$ and $\dot{u}^{(\lambda)}(t) =(\lambda,0)\in C(t)$ for all $t\in[0,T]$. In addition,
		\begin{equation*}
		A_1 \dot{u}^{(\lambda)}(t)+A_0 u^{(\lambda)}(t)-f(t)=\begin{pmatrix} 0 & 0\\ 0 & 1 \end{pmatrix} \begin{pmatrix} \dot{u}^{(\lambda)}_1(t)\\ \dot{u}^{(\lambda)}_2(t) \end{pmatrix}+\begin{pmatrix} 0 & 0\\ 0 & 1 \end{pmatrix} \begin{pmatrix} u^{(\lambda)}_1(t)\\ u^{(\lambda)}_2(t) \end{pmatrix}-\begin{pmatrix} 0\\ t\end{pmatrix} = \begin{pmatrix} 0\\ -t\end{pmatrix}. 
		\end{equation*} 
Since $\mathcal{N}_{C(t)}(\dot{u}^{(\lambda)}(t))=\{0\}\times\mathbb{R}$, this yields $A_1 \dot{u}^{(\lambda)}(t)+A_0 u^{(\lambda)}(t)-f(t)\in-\mathcal{N}_{C(t)}(\dot{u}^{(\lambda)}(t))$ for all $t\in[0,T]$. Thus, for any $\lambda\in\mathbb{R}$, $u^{(\lambda)}$ is a solution of~\eqref{main}. As $\Vert u^{(\lambda)}\Vert_{\mathcal{C}^0} = \vert \lambda \vert T$, the solutions of~\eqref{main} form an unbounded subset of $\mathcal{C}^0([0,T],{\mathcal H})$.
}
\end{example}

Our aim in this section is to establish some sets of conditions ensuring that the solution set of~\eqref{main} is bounded.

\begin{theorem}\label{boundedness_sol_1}
	If $C(t)$ is nonempty for all $t\in[0,T]$ and the assumptions~{\rm \ref{continuousassumption}}, {\rm \ref{boundedness_assumption}} are satisfied then,  for any $u_0\in C(0)$, the solution set ${\rm Sol}(P,u_0)$ is bounded in both spaces  $\mathcal{C}^0([0,T],{\mathcal H})$ and $W^{1,1}([0,T],{\mathcal H})$.
\end{theorem}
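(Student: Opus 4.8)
The plan is to exploit the fact—already used in the proof of Theorem~\ref{continuity_dependence_1}—that the velocity of any solution is forced to stay inside the constraint set, and then to propagate a uniform bound on $\bigcup_{t\in[0,T]}C(t)$ through the fundamental theorem of calculus. Note that the operators $A_0,A_1$ and the separability of $\mathcal{H}$ will play no role whatsoever in the argument.

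First I would record that, for \emph{every} solution $u\in{\rm Sol}(P,u_0)$, the inclusion $\dot u(t)\in C(t)$ holds for almost every $t\in[0,T]$: indeed, since $-{\mathcal N}_{C(t)}(\dot u(t))$ contains $A_1\dot u(t)+A_0u(t)-f(t)$ it is nonempty, and as ${\mathcal N}_{C(t)}(x)=\emptyset$ whenever $x\notin C(t)$, this forces $\dot u(t)\in C(t)$ a.e. Next I would extract a uniform bound for the constraint sets. By Assumption~\ref{boundedness_assumption} there is $M>0$ with $C(0)\subset\bar{\mathbb{B}}(0,M)$, and by Assumption~\ref{continuousassumption} the function $g$ is continuous on the compact interval $[0,T]$, so $L:=\max_{t\in[0,T]}|g(t)-g(0)|<\infty$. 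For any $t\in[0,T]$ and any $x\in C(t)$ one has $d(x,C(0))\le\sup_{y\in C(t)}d(y,C(0))\le d_H(C(t),C(0))\le|g(t)-g(0)|\le L$, hence $\|x\|\le M+L=:R$ (pick $y_\varepsilon\in C(0)$ with $\|x-y_\varepsilon\|<L+\varepsilon$ and let $\varepsilon\to0^+$). Thus $C(t)\subset\bar{\mathbb{B}}(0,R)$ for all $t\in[0,T]$, and in particular $\|\dot u(t)\|\le R$ for a.e.\ $t\in[0,T]$, for every $u\in{\rm Sol}(P,u_0)$.

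Finally I would conclude by integration. By Proposition~\ref{sobolev_w11} any solution satisfies $u(t)=u_0+\int_0^t\dot u(\tau)\,d\tau$, and since $u_0\in C(0)\subset\bar{\mathbb{B}}(0,M)$ we get $\|u(t)\|\le M+\int_0^t\|\dot u(\tau)\|\,d\tau\le M+RT$ for all $t\in[0,T]$, so $\|u\|_{\mathcal{C}^0}\le M+RT$. Using the norm~\eqref{norm_in_W1,1}, $\|u\|_{W^{1,1}}=\int_0^T\|u(\tau)\|\,d\tau+\int_0^T\|\dot u(\tau)\|\,d\tau\le(M+RT)T+RT$. Both bounds are independent of the particular solution, which gives boundedness of ${\rm Sol}(P,u_0)$ in $\mathcal{C}^0([0,T],{\mathcal H})$ and in $W^{1,1}([0,T],{\mathcal H})$. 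I do not expect a genuine obstacle here; the only point requiring a little care is the passage from the Hausdorff-distance estimate in Assumption~\ref{continuousassumption} to the uniform norm bound on $\bigcup_{t\in[0,T]}C(t)$, together with the (already noted) observation that a solution's derivative must lie in $C(t)$ almost everywhere.
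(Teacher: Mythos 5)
Your proposal is correct and follows essentially the same route as the paper: observe that $\dot u(t)\in C(t)$ a.e.\ (since the normal cone is empty off the set), use \ref{boundedness_assumption} together with the Hausdorff-continuity bound from \ref{continuousassumption} to get a uniform radius $R$ with $C(t)\subset\bar{\mathbb{B}}(0,R)$ for all $t$, and then integrate $\dot u$ to bound $\|u\|_{\mathcal{C}^0}$ and $\|u\|_{W^{1,1}}$. The only cosmetic difference is that the paper splits $[0,t]$ into the null set where $\|\dot u\|>\rho$ and its complement before integrating, whereas you integrate the a.e.\ bound directly; both are fine.
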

\begin{proof} Let $u_0\in C(0)$ be given arbitrarily. If ${\rm Sol}(P,u_0)$ is empty, then it is bounded. Suppose that ${\rm Sol}(P,u_0)\neq\emptyset$ and $u$ is an element from ${\rm Sol}(P,u_0)$. As $C(0)$ is bounded, we can find $\rho_0>0$ such that $C(0)\subset \rho_0\bar{\mathbb{B}}(0,1)$. Let $g:[0,T]\to\mathbb{R}$ be  a continuous function satisfying~\eqref{continuousset}. Thus, for all $t\in[0,T]$ one has $C(t)\subset C(0)+\vert g(0)-g(t)\vert$. So, $C(t)\subset \rho\bar{\mathbb{B}}(0,1)$ for all $t\in[0,T]$, where $\rho:=\rho_0+\max\{\vert g(0)-g(s)\vert\mid s\in[0,T]\}$. Since $\dot{u}(t)\in C(t)$ for almost every $t\in[0,T]$, one has $\Vert \dot{u}(t)\Vert \leq \rho$ for almost every $t\in[0,T]$. For any $t\in[0,T]$, we define two sets $\Omega_1(t)=\{s\in[0,t]\mid \Vert \dot{u}(s)\Vert \leq \rho \}$ and $\Omega_2(t)=\{s\in[0,t]\mid \Vert \dot{u}(s)\Vert > \rho \}$. Then, the sets $\Omega_1(t)$ and $\Omega_2(t)$ are measurable, and $\mu(\Omega_2(t))=0$ with $\mu$ being the Lebesgue measure on $\mathbb{R}$. So, by \cite[Remark~3.4(c)]{ATY_2021} and \cite[Theorem~4, p.~46]{diestel1977}, we have
	$$\begin{array}{rl}
		\|u(t)\|=\|u_0+\displaystyle\int_0^{t}\dot{u}(\tau)d\tau\|&=\|u_0+\displaystyle\int_{\Omega_1(t)} \dot{u}(\tau)d\tau+\displaystyle\int_{\Omega_2(t)} \dot{u}(\tau)d\tau\|\\&\leq \Vert u_0\Vert +\displaystyle\int_{\Omega_1(t)} \Vert\dot{u}(\tau)\Vert d\tau+\displaystyle\int_{\Omega_2(t)} \Vert\dot{u}(\tau)\Vert d\tau\\
		&\leq \Vert u_0\Vert + \rho\mu(\Omega_1(t))\\
		&\leq \Vert u_0 \Vert + \rho T.
	\end{array}$$
 Thus, $\Vert u \Vert_{\mathcal{C}^0} \leq\Vert u_0 \Vert + \rho T$. This establishes the boundedness of ${\rm Sol}(P,u_0)$ in $\mathcal{C}^0([0,T],{\mathcal H})$.	Since
   $\|u(t)\|\leq \Vert u_0 \Vert + \rho T$ for all $t\in[0,T]$, $\Vert \dot{u}(t)\Vert \leq \rho$ for a.e. $t\in[0,T]$, and $u\in {\rm Sol}(P,u_0)$ was chosen arbitrarily, by~\eqref{norm_in_W1,1} we can assert that ${\rm Sol}(P,u_0)$ is a bounded subset of the Sobolev space $W^{1,1}([0,T],{\mathcal H})$.
\end{proof}

To deal with the case where the sets $C(t)$, $t\in [0,T]$, can be unbounded, we will need the following technical lemma. Since we still have not found any reference containing this statement, a detailed proof is given here.  

\begin{lemma}\label{gronwall_type}
	Let $f$ be a Lebesgue integrable, real-valued function defined on $[0,T]$. If 
	\begin{equation}\label{gronwall_type_inq}
	f(t)\leq a+b\displaystyle\int_0^t f(\tau)d\tau \quad {\rm a.e.}\ t\in[0,T] 
	\end{equation}
	for some constants $a,b$ with $b\neq 0$, then $\displaystyle\int_0^t f(\tau)d\tau  \leq \dfrac{a}{b}(\exp(bt)-1)$ for all $t\in[0,T]$.
\end{lemma}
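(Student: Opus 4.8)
The plan is to define the auxiliary function $F(t):=\int_0^t f(\tau)\,d\tau$, which is absolutely continuous on $[0,T]$ with $F(0)=0$ and $F'(t)=f(t)$ for almost every $t\in[0,T]$. Rewriting the hypothesis~\eqref{gronwall_type_inq} in terms of $F$, we have $F'(t)\le a+bF(t)$ for a.e.\ $t\in[0,T]$. The idea is the classical integrating-factor trick: multiply through by $e^{-bt}$ and recognize the left-hand side as (a bound for) the derivative of $e^{-bt}F(t)$. Precisely, since $\frac{d}{dt}\big(e^{-bt}F(t)\big)=e^{-bt}\big(F'(t)-bF(t)\big)\le a\,e^{-bt}$ almost everywhere, and since $t\mapsto e^{-bt}F(t)$ is absolutely continuous (a product of an absolutely continuous function and a $C^1$ function on a compact interval), I can integrate this differential inequality from $0$ to $t$ using the fundamental theorem of calculus for absolutely continuous functions.

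Carrying this out: for each fixed $t\in[0,T]$,
\[
e^{-bt}F(t)=e^{-bt}F(t)-e^{0}F(0)=\int_0^t \frac{d}{d\tau}\big(e^{-b\tau}F(\tau)\big)\,d\tau\le \int_0^t a\,e^{-b\tau}\,d\tau=\frac{a}{b}\big(1-e^{-bt}\big),
\]
where the last equality uses $b\neq 0$. Multiplying both sides by $e^{bt}>0$ gives $F(t)\le \frac{a}{b}\big(e^{bt}-1\big)$, which is exactly the claimed inequality $\int_0^t f(\tau)\,d\tau\le \frac{a}{b}(\exp(bt)-1)$ for all $t\in[0,T]$.

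The only point requiring a little care — and the main (minor) obstacle — is the justification that the differential inequality for $F'$, which holds only almost everywhere, can legitimately be integrated to yield a pointwise inequality for $F$ itself. This is valid because $e^{-b\cdot}F(\cdot)$ is absolutely continuous, hence equals the integral of its (a.e.-defined) derivative; one must also note that the set where $F'(t)=f(t)$ fails and the set where~\eqref{gronwall_type_inq} fails are both null, so their union is null and does not affect the integral. No additional hypotheses (such as $b>0$ or $a\ge 0$) are needed: the argument works for any real constants $a,b$ with $b\neq 0$, since division by $b$ and multiplication by the positive quantity $e^{bt}$ preserve the direction of the inequality regardless of sign. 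I would remark in passing that when $b>0$ and $a\ge 0$ this recovers the usual integral form of Grönwall's inequality, and that the hypothesis $b\neq 0$ is only used to write the closed-form antiderivative of $e^{-b\tau}$ (the case $b=0$ being the trivial statement $F(t)\le at$).
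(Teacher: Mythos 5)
Your proposal is correct and follows essentially the same route as the paper's proof: both multiply by the integrating factor $e^{-bt}$, identify the left-hand side as the a.e.\ derivative of the absolutely continuous function $t\mapsto e^{-bt}\int_0^t f(\tau)\,d\tau$, and integrate the resulting differential inequality via the fundamental theorem of calculus for absolutely continuous functions. The care you take with the null sets and with the sign of $e^{bt}$ matches the paper's treatment, so no changes are needed.
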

\begin{proof}
	Let $f$ be a Lebesgue integrable function on $[0,T]$ satisfying~\eqref{gronwall_type_inq}. Multiplying both sides of the inequality in~\eqref{gronwall_type_inq} by $\exp(-bt)$ yields
	\begin{equation}\label{gronwall_type_inq1}
	\exp(-bt)f(t) -b\exp(-bt)\displaystyle\int_0^t f(\tau)d\tau \leq a\exp(-bt) \quad {\rm a.e.}\ t\in[0,T].
	\end{equation} By~\cite[Theorem~8, p.~324]{kolmogorov1975}, one has 
	$$\dfrac{d}{ds} \left(\exp(-bs)\displaystyle\int_0^s f(\tau)d\tau \right)= \exp(-bs)f(s)-b\exp(-bs)\displaystyle\int_0^s f(\tau)d\tau.$$
	Thus, taking the Lebesgue integral on both sides of the inequality in~\eqref{gronwall_type_inq1} and applying~\cite[Remarks~11.23(c)]{rudinprinciples}, we obtain
	$$\displaystyle\int_0^t \dfrac{d}{ds} \left(\exp(-bs)\displaystyle\int_0^s f(\tau)d\tau \right)ds \leq \displaystyle\int_0^t a\exp(-bs) ds \quad \forall t\in[0,T].$$
	It follows that
	$$\exp(-bt) \displaystyle\int_0^t f(\tau)d\tau  \leq \dfrac{a}{b} (1- \exp(-bt))\quad \forall t\in[0,T].$$
	Hence, we get
	$$ \displaystyle\int_0^t f(\tau)d\tau  \leq \dfrac{a}{b}(\exp(bt)-1)\quad \forall t\in[0,T].$$
	
	The proof is complete.
\end{proof}
\begin{theorem}\label{boundedness_sol_2}
	If the assumptions~{\rm\ref{convexassumption}},~{\rm \ref{continuousassumption}} and {\rm \ref{coercive_assumption}} are satisfied then, for any $u_0\in C(0)$, the solution set ${\rm Sol}(P,u_0)$ is bounded in both spaces  $\mathcal{C}^0([0,T],{\mathcal H})$ and $W^{1,1}([0,T],{\mathcal H})$.
\end{theorem}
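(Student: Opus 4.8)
The plan is to show that \emph{every} solution $u\in{\rm Sol}(P,u_0)$ obeys uniform bounds $\|u(t)\|\le K$ for all $t$ and $\|\dot u(t)\|\le K'$ for a.e.\ $t$, with $K,K'$ depending only on the data of~\eqref{main} and on $\|u_0\|$, not on the particular solution. Fix $u\in{\rm Sol}(P,u_0)$. Exactly as in the proof of Theorem~\ref{continuity_dependence_1}, the convexity of the sets $C(t)$ (Assumption~\ref{convexassumption}) allows us to rewrite the differential inclusion in~\eqref{main} as the variational inequality
$$\langle A_1\dot u(t)+A_0u(t)-f(t),\ \dot u(t)-z\rangle\le 0\qquad\forall z\in C(t),$$
valid for a.e.\ $t\in[0,T]$, and in particular $\dot u(t)\in C(t)$ for a.e.\ $t$. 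Set $F:=\max\{\|f(s)\|\mid s\in[0,T]\}$ and $M:=1+\max\{|g(s)-g(0)|\mid s\in[0,T]\}$, where $g$ comes from Assumption~\ref{continuousassumption}. The heart of the argument is to establish, for a.e.\ $t$, a pointwise estimate of the form $\|\dot u(t)\|\le c_6\|u(t)\|+c_7$ with constants $c_6,c_7\ge 0$ depending only on $\|A_0\|,\|A_1\|,c_1,c_2,M,F$; a Gr\"onwall-type argument then closes everything.

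The first step toward that estimate is to upgrade the semicoercivity of $A_1$ on the \emph{fixed} set $C(0)$ (Assumption~\ref{coercive_assumption}) into a coercivity-type lower bound along the trajectory. Since $\dot u(t)\in C(t)$, Assumption~\ref{continuousassumption} gives $d(\dot u(t),C(0))\le d_H(C(t),C(0))\le|g(t)-g(0)|<M$, so one can pick $w(t)\in C(0)$ with $\|\dot u(t)-w(t)\|<M$. Writing $\dot u(t)=w(t)+\bigl(\dot u(t)-w(t)\bigr)$, expanding $\langle A_1\dot u(t),\dot u(t)\rangle$, using that $A_1$ is positive semidefinite and bounded together with Assumption~\ref{coercive_assumption} applied to $w(t)$, and absorbing the cross term $2\|A_1\|M\|w(t)\|$ by Young's inequality, one arrives (after also using $\|w(t)\|^2\ge\tfrac12\|\dot u(t)\|^2-M^2$) at
$$\langle A_1\dot u(t),\dot u(t)\rangle\ \ge\ \frac{c_1}{4}\,\|\dot u(t)\|^2-c_3\qquad\text{for a.e.\ }t\in[0,T],$$
with $c_3\ge0$ a constant built from $c_1,c_2,M,\|A_1\|$.

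Next, using Assumption~\ref{continuousassumption} once more, $d(u_0,C(t))\le d_H(C(0),C(t))<M$, so choose $z(t)\in C(t)$ with $\|z(t)-u_0\|<M$, hence $\|z(t)\|<\|u_0\|+M=:R$. Put $z=z(t)$ in the variational inequality, move $\langle A_1\dot u(t),\dot u(t)\rangle$ to the left, and bound the three remaining inner products by Cauchy--Schwarz using the bounds on $\|z(t)\|$ and $\|f(t)\|$; this yields, with $a(t):=\|\dot u(t)\|$ and $b(t):=\|u(t)\|$,
$$\frac{c_1}{4}a(t)^2-c_3\ \le\ (\|A_1\|R+F)\,a(t)+\|A_0\|\,b(t)\,a(t)+\|A_0\|R\,b(t)+FR.$$
Absorbing the terms linear and quadratic in $a(t)$ into $\tfrac{c_1}{4}a(t)^2$ by Young's inequality leaves $\mu\,a(t)^2\le c_4\,b(t)^2+c_5$ for some $\mu>0$ and constants $c_4,c_5\ge0$, whence $a(t)\le c_6\,b(t)+c_7$ for a.e.\ $t$ (using $\sqrt{x+y}\le\sqrt x+\sqrt y$). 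Since $u$ is absolutely continuous, $u(t)=u_0+\int_0^t\dot u(\tau)\,d\tau$, so
$$\|u(t)\|\ \le\ \|u_0\|+\int_0^t\|\dot u(\tau)\|\,d\tau\ \le\ \bigl(\|u_0\|+c_7T\bigr)+c_6\int_0^t\|u(\tau)\|\,d\tau,\qquad t\in[0,T].$$
The function $t\mapsto\|u(t)\|$ is continuous, hence Lebesgue integrable, so Lemma~\ref{gronwall_type} applied with this function in the role of $f$, $a=\|u_0\|+c_7T$ and $b=c_6$ (the case $c_6=0$ being immediate), followed by substituting the resulting bound on $\int_0^t\|u(\tau)\|\,d\tau$ back into the inequality, gives $\|u(t)\|\le(\|u_0\|+c_7T)\exp(c_6T)=:K$ for all $t\in[0,T]$, and then $\|\dot u(t)\|\le c_6K+c_7=:K'$ for a.e.\ $t$. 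As $u\in{\rm Sol}(P,u_0)$ was arbitrary, ${\rm Sol}(P,u_0)$ is bounded in $\mathcal{C}^0([0,T],{\mathcal H})$ and, by~\eqref{norm_in_W1,1}, in $W^{1,1}([0,T],{\mathcal H})$ as well.

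The step I expect to be the main obstacle is the second one: extracting a genuine coercivity-type estimate for $\langle A_1\dot u(t),\dot u(t)\rangle$ along the trajectory from the hypothesis that $A_1$ is merely semicoercive on the single set $C(0)$, which forces the joint use of Assumptions~\ref{continuousassumption} and~\ref{coercive_assumption}, and then carrying the constants through the successive Young-inequality absorptions so that a strictly positive multiple of $\|\dot u(t)\|^2$ survives on the left. Once that estimate and the ensuing bound $\|\dot u(t)\|\le c_6\|u(t)\|+c_7$ are secured, the remainder is the routine Gr\"onwall bootstrap.
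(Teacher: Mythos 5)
Your proof is correct and follows essentially the same route as the paper's: pick a uniformly bounded selection $z_t\in C(t)$ via~\ref{continuousassumption}, substitute it into the variational inequality, use a coercivity bound for $A_1$ along the trajectory to derive a pointwise estimate $\Vert\dot u(t)\Vert\le {\rm const}\cdot(1+\Vert u(t)\Vert)$, and close with Lemma~\ref{gronwall_type}. The only differences are minor: you prove inline, via the decomposition $\dot u(t)=w(t)+(\dot u(t)-w(t))$ and Young's inequality, the upgrade of~\ref{coercive_assumption} to a coercivity estimate on every $C(t)$ (the paper outsources exactly this step to~\cite[Remark~3.2]{ATY_2021}); you resolve the resulting quadratic inequality by Young absorption rather than by the quadratic root formula; and you apply the Gr\"onwall lemma to $\Vert u(\cdot)\Vert$ rather than to $\Vert\dot u(\cdot)\Vert$ --- all of which lead to the same conclusion.
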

\begin{proof} Given any $u_0\in C(0)$. If ${\rm Sol}(P,u_0)$ is empty, then it is bounded. Suppose that ${\rm Sol}(P,u_0)$ is nonempty. Take any $u\in {\rm Sol}(P,u_0)$ and let $\varepsilon>0$ be given arbitrarily. Since $C(t) $ is nonempty, for any $t\in[0,T]$ there exists $z_t\in C(t)$ satisfying $$\Vert u_0-z_t\Vert < d(u_0, C(t))+\varepsilon.$$ By \ref{continuousassumption}, we have
\begin{align*} 
	\|z_t\|-\|u_0\|\leq\Vert u_0-z_t\Vert < d(u_0, C(t))+\varepsilon&\leq d_H(C(0), C(t)) +\varepsilon\\
	&\leq \vert g(0)-g(t)\vert +\varepsilon.
	\end{align*}
Then, setting $\beta:= \|u_0\|+\max\limits_{\tau\in [0,T]}\vert g(0)-g(\tau)\vert +\varepsilon$, we get $\Vert z_t\Vert < \beta$. So, for every $t\in [0, T]$ one can find some $z_t\in C(t)$ such that $\Vert z_t\Vert < \beta$. As $u\in {\rm Sol}(P,u_0)$, by~{\rm\ref{convexassumption}} one has for almost every $t\in[0,T]$ that
$$\langle A_1 \dot{u}(t)+A_0 u(t)-f(t),  \dot{u}(t)- z\rangle \leq 0\quad \forall z\in C(t).$$
Substituting $z=z_t$ into the above inequality yields $$\langle A_1 \dot{u}(t)+A_0 u(t)-f(t),  \dot{u}(t)- z_t\rangle \leq 0$$
for almost every $t\in[0,T]$. Thus,
\begin{equation}\label{dotu_estimate}
	\langle A_1 \dot{u}(t),\dot{u}(t)\rangle -\langle A_1 \dot{u}(t),z_t\rangle +\langle A_0 u(t)-f(t),  \dot{u}(t)\rangle -\langle A_0 u(t)-f(t), z_t\rangle  \leq 0.
\end{equation}
Using the assumptions~{\rm \ref{continuousassumption}},~{\rm \ref{coercive_assumption}}, and~\cite[Remark~3.2]{ATY_2021}, we can find positive constants $\hat c_1, \hat c_2$  such that
$\langle A_1x,x\rangle \geq \hat c_1\Vert x\Vert^{2}-\hat c_2$ for all $t\in [0,T]$ and $x\in C(t)$. Then,~\eqref{dotu_estimate} implies that
$$\hat c_1\Vert \dot{u}(t)\Vert^{2}-\hat c_2-\langle A_1 \dot{u}(t),z_t\rangle +\langle A_0 u(t)-f(t),  \dot{u}(t)\rangle -\langle A_0 u(t)-f(t), z_t\rangle  \leq 0$$
for a.e. $t\in[0,T]$. So, one has
$$\hat c_1\Vert \dot{u}(t)\Vert^{2}-\hat c_2- \beta\Vert A_1\Vert \Vert\dot{u}(t)\Vert -(\Vert A_0\Vert \Vert u(t)\Vert +\Vert f\Vert_{\mathcal{C}^0}) \Vert \dot{u}(t)\Vert -\beta(\Vert A_0 \Vert \Vert u(t)\Vert  +\Vert f\Vert_{\mathcal{C}^0}) \leq 0$$
for a.e. $t\in[0,T]$. For each $t\in[0,T]$, setting $a_1(t)= \beta\Vert A_1\Vert +\Vert A_0\Vert \Vert u(t)\Vert +\Vert f\Vert_{\mathcal{C}^0}$ and $$a_2(t)= \beta(\Vert A_0 \Vert \Vert u(t)\Vert  +\Vert f\Vert_{\mathcal{C}^0}) + \hat c_2,$$ we get 
\begin{equation}\label{dotu_estimate1}
	\hat c_1\Vert \dot{u}(t)\Vert^{2} -a_1(t)\Vert\dot{u}(t)\Vert-a_2(t) \leq 0\quad {\rm a.e.}\ t\in[0,T].
\end{equation} As one has $\hat c_1 >0$ and $a_2(t)>0$ for every $t\in [0,T]$, the quadratic polynomial $q(x) := \hat c_1 x^2- a_1(t) x -a_2(t)$ has two roots with different signs.  Hence,~\eqref{dotu_estimate1} holds if and only if 
$$\Vert \dot{u}(t)\Vert \leq \dfrac{a_1(t)+\sqrt{a_1(t)^2-4\hat{c}_1a_2(t)}}{2\hat{c}_1}\quad\ {\rm a.e.}\ t\in[0,T].$$ Since $\sqrt{a_1(t)^2-4\hat{c}_1a_2(t)} \leq a_1(t)$, this yields $\Vert \dot{u}(t)\Vert \leq \dfrac{a_1(t)}{\hat{c}_1}$ for a.e. $t\in[0,T]$. Therefore, $$\Vert \dot{u}(t)\Vert \leq \dfrac{\beta\Vert A_1\Vert +\Vert A_0\Vert \Vert u(t)\Vert +\Vert f\Vert_{\mathcal{C}^0}}{\hat{c}_1}$$ for a.e. $t\in[0,T]$. Then one has 
\begin{equation}\label{estimate_for_dot_u}\Vert \dot{u}(t)\Vert \leq \gamma (1+\Vert u(t) \Vert)\quad {\rm a.e.}\ t\in[0,T],
\end{equation}
where $\gamma := \max\left\{\dfrac{\beta\Vert A_1\Vert +\Vert f\Vert_{\mathcal{C}^0}}{\hat{c}_1},\dfrac{\Vert A_0\Vert}{\hat{c}_1}\right\}$. Since
\begin{equation}\label{u_estimate}
	\|u(t)\|=\|u_0+\displaystyle\int_0^{t}\dot{u}(\tau)d\tau\|\leq \Vert u_0 \Vert+\displaystyle\int_0^t \Vert \dot{u}(\tau)\Vert d\tau
\end{equation}
(see \cite[Remark~3.4(c)]{ATY_2021} and \cite[Theorem~4(ii), p.~46]{diestel1977}), from~\eqref{estimate_for_dot_u} it follows that $$\Vert \dot{u}(t)\Vert \leq \gamma (1+ \Vert u_0 \Vert)+\gamma\displaystyle\int_0^t \Vert \dot{u}(\tau)\Vert d\tau\quad {\rm a.e.}\ t\in[0,T].$$ So, applying Lemma~\ref{gronwall_type} for $f(t):= \Vert \dot{u}(t)\Vert$, $a:= \gamma (1+ \Vert u_0 \Vert)$, and $b:=\gamma$ gives
$$\displaystyle\int_0^t \Vert \dot{u}(\tau)\Vert d\tau \leq (1+ \Vert u_0 \Vert) (\exp(\gamma t)-1) \leq  (1+ \Vert u_0 \Vert) (\exp(\gamma T)-1)\quad \forall t\in[0,T].$$
Combining this with~\eqref{u_estimate} yields 
\begin{equation}\label{estimate_for_u} \|u(t)\|\leq\|u_0\|+(1+ \Vert u_0 \Vert) (\exp(\gamma T)-1)\quad \forall t\in[0,T].
\end{equation}
It follows that $\|u\|_{\mathcal{C}^0}\leq\|u_0\|+(1+ \Vert u_0 \Vert) (\exp(\gamma T)-1)$. So, ${\rm Sol}(P,u_0)$ is a bounded subset of $\mathcal{C}^0([0,T],{\mathcal H})$. Finally, using the estimates~\eqref{estimate_for_dot_u},~\eqref{estimate_for_u}, and formula \eqref{norm_in_W1,1}, we can find a constant $\rho>0$ such that $\Vert u \Vert_{W^{1,1}}\leq\rho$ for any $u\in{\rm Sol}(P,u_0)$. The proof is complete.
\end{proof}

\begin{theorem}\label{boundedness_sol_3}
	If the assumptions~{\rm\ref{convexassumption}},~{\rm \ref{lipschitz_like_assumption}} and {\rm \ref{coercive_assumption_1}} are satisfied then, for any $u_0\in C(0)$, the solution set ${\rm Sol}(P,u_0)$ is bounded in both spaces  $\mathcal{C}^0([0,T],{\mathcal H})$ and $W^{1,1}([0,T],{\mathcal H})$.
\end{theorem}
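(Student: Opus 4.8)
The plan is to reuse the scheme of the proof of Theorem~\ref{boundedness_sol_2}, the only genuinely new ingredient being to replace the Hausdorff-continuity assumption~\ref{continuousassumption} by the Lipschitz-like assumption~\ref{lipschitz_like_assumption} in the step that produces a uniformly bounded selection $t\mapsto z_t\in C(t)$. As before, if ${\rm Sol}(P,u_0)=\emptyset$ there is nothing to prove, so fix an arbitrary $u\in{\rm Sol}(P,u_0)$.

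First I would show that there is a constant $\beta>0$ such that for every $t\in[0,T]$ one can find $z_t\in C(t)$ with $\Vert z_t\Vert\le\beta$. For each $\tau\in[0,T]$, pick $x_\tau\in C(\tau)$ (possible since $C(\tau)\neq\emptyset$ by~\ref{convexassumption}) and apply the Lipschitz-likeness of $C$ around $(\tau,x_\tau)$ to get a neighborhood $V_\tau$ of $\tau$ in $[0,T]$, a neighborhood $W_\tau$ of $x_\tau$, and a constant $\kappa_\tau>0$ with $C(\lambda)\cap W_\tau\subset C(\lambda')+\kappa_\tau|\lambda-\lambda'|\bar{\mathbb{B}}(0,1)$ for all $\lambda,\lambda'\in V_\tau$. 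Since $x_\tau\in C(\tau)\cap W_\tau$, taking $\lambda=\tau$ gives $x_\tau\in C(t)+\kappa_\tau|t-\tau|\bar{\mathbb{B}}(0,1)$ for every $t\in V_\tau$, so there exists $z\in C(t)$ with $\Vert x_\tau-z\Vert\le\kappa_\tau|t-\tau|\le\kappa_\tau T$. By compactness of $[0,T]$, choose $\tau_1,\dots,\tau_m$ with $[0,T]=\bigcup_{i=1}^m V_{\tau_i}$ and set $\beta:=\max_{1\le i\le m}\big(\Vert x_{\tau_i}\Vert+\kappa_{\tau_i}T\big)$. Then, given $t\in[0,T]$, picking an index $i$ with $t\in V_{\tau_i}$ and a point $z_t\in C(t)$ with $\Vert x_{\tau_i}-z_t\Vert\le\kappa_{\tau_i}T$, we get $\Vert z_t\Vert\le\Vert x_{\tau_i}\Vert+\kappa_{\tau_i}T\le\beta$, as desired.

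With this $\beta$ at hand, the rest of the argument is a verbatim repetition of the proof of Theorem~\ref{boundedness_sol_2}, with one simplification: assumption~\ref{coercive_assumption_1} already supplies constants $c_1,c_2>0$ with $\langle A_1x,x\rangle\ge c_1\Vert x\Vert^2-c_2$ for all $t\in[0,T]$ and all $x\in C(t)$, so the appeal to~\cite[Remark~3.2]{ATY_2021} used there to pass from a bound on $C(0)$ to a bound on all $C(t)$ is not needed. Namely, by~\ref{convexassumption} the inclusion in~\eqref{main} is equivalent to $\langle A_1\dot u(t)+A_0u(t)-f(t),\dot u(t)-z\rangle\le0$ for all $z\in C(t)$, valid a.e.\ on $[0,T]$; substituting $z=z_t$, using $\Vert z_t\Vert\le\beta$ and~\ref{coercive_assumption_1}, and discarding a positive-discriminant quadratic term exactly as in the derivation of~\eqref{estimate_for_dot_u}, one obtains $\Vert\dot u(t)\Vert\le\gamma(1+\Vert u(t)\Vert)$ a.e.\ on $[0,T]$ for a suitable constant $\gamma>0$. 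Since $u(t)=u_0+\int_0^t\dot u(\tau)d\tau$, this gives $\Vert\dot u(t)\Vert\le\gamma(1+\Vert u_0\Vert)+\gamma\int_0^t\Vert\dot u(\tau)\Vert d\tau$ a.e., so Lemma~\ref{gronwall_type} (applied with $f(t):=\Vert\dot u(t)\Vert$, $a:=\gamma(1+\Vert u_0\Vert)$, $b:=\gamma$) bounds $\int_0^t\Vert\dot u(\tau)\Vert d\tau$ by a quantity depending only on $\gamma$, $T$, $\Vert u_0\Vert$. Hence $\Vert u\Vert_{\mathcal{C}^0}$ is bounded, and combining the bounds on $\Vert u(t)\Vert$ and $\Vert\dot u(t)\Vert$ with~\eqref{norm_in_W1,1} shows $\Vert u\Vert_{W^{1,1}}$ is bounded, uniformly over $u\in{\rm Sol}(P,u_0)$.

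The main obstacle is precisely the first step: extracting a uniform selection bound from the purely local Lipschitz-like property. The key point is that Lipschitz-likeness of $C$ around $(\tau,x_\tau)$ forces $d(x_\tau,C(t))\le\kappa_\tau|t-\tau|$ on an entire neighborhood of $\tau$, so that, by compactness of $[0,T]$, the finitely many base points $x_{\tau_i}$ already control $\Vert z_t\Vert$ for all $t\in[0,T]$. Once the bound $\beta$ is secured, everything reduces to the coercivity-plus-Gronwall estimate already carried out for Theorem~\ref{boundedness_sol_2}.
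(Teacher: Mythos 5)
Your proposal is correct and follows essentially the same route as the paper: a compactness argument extracts a uniform bound $\beta$ on a selection $z_t\in C(t)$ from the local Lipschitz-like property (using the base points $x_{\tau_i}$ exactly as the paper uses its $x_{t_i}$), after which the coercivity--Gronwall machinery from the proof of Theorem~\ref{boundedness_sol_2} is reused verbatim. Your observation that~\ref{coercive_assumption_1} makes the appeal to the remark of~\cite{ATY_2021} unnecessary matches the paper's treatment as well.
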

\begin{proof}
For each $t\in[0,T]$, pick a point $x_t\in C(t)$. As $C$ is Lipschitz-like around $(t,x_t)$, there exist an open neighborhood $V_t$ of $t$ in the induced topology of $[0,T]\subset\mathbb R$, a neighborhood $W_t$ of $x_t$ in $\mathcal{H}$, and a constant $\kappa_t >0$ such that 
	\begin{equation}\label{lipschitz-like_inq_1}
		C(t')\cap W_t \subset C(t'') +\kappa_t\vert t'-t''\vert \bar{\mathbb{B}}(0,1)\quad\forall t',t''\in V_t.
	\end{equation}
	Since $[0,T]=\displaystyle\bigcup_{t\in[0,T]}V_t$, the compactness of $[0,T]$ implies the existence of $t_1,\dots,t_k$ in $[0,T]$ such that $[0,T]=\bigcup\limits_{i=1}^kV_{t_i}$. For each $i\in\{1,\dots,k\}$, we have $x_{t_i}\in  W_{t_i}$. So, thanks to~\eqref{lipschitz-like_inq_1}, for every $t\in V_{t_i}$ we can find $z^{(i)}_t\in C(t)$ and $\xi^{(i)}_t\in  \bar{\mathbb{B}}(0,1)$ satisfying $x_{t_i} =z^{(i)}_t+ \kappa_{t_i} \vert t-t_i\vert \xi^{(i)}_t$. Then, \begin{equation}\label{estimate_z_t_i}\Vert z^{(i)}_t\Vert \leq \Vert x_{t_i} \Vert +\kappa_{t_i} \vert t-t_i\vert\leq \Vert x_{t_i} \Vert +\kappa_{t_i} T.
	\end{equation} Setting $\beta=\max\big\{\Vert x_{t_i} \Vert +\kappa_{t_i} T\mid i\in\{1,\dots,k\}\big\}$, we have $\beta >0$. For each $t\in[0,T]$, there is some $i\in\{1,\dots,k\}$ such that $t\in V_{t_i}$ and, by~\eqref{estimate_z_t_i}, the element $z^{(i)}_t\in C(t)$ satisfies the estimate $\Vert z^{(i)}_t\Vert \leq \beta$. Therefore, for every $t\in[0,T]$, there exists at least one point of the form $z^{(i)}_t$ such that $z^{(i)}_t\in C(t)$ and $\Vert z^{(i)}_t\Vert \leq \beta$. 
	
	Let $u_0\in C(0)$ be given arbitrarily. Since ${\rm Sol}(P,u_0)$ bounded if it is empty, it suffices to consider the case  ${\rm Sol}(P,u_0)\neq\emptyset$. Take any $u\in{\rm Sol}(P,u_0)$. By~{\rm\ref{convexassumption}} we deduce for almost every $t\in[0,T]$ that
	$\langle A_1 \dot{u}(t)+A_0 u(t)-f(t),  \dot{u}(t)- z\rangle \leq 0$ for all $z\in C(t).$ Substituting $z=z^{(i)}_t$ into the last inequality yields $$\langle A_1 \dot{u}(t)+A_0 u(t)-f(t),  \dot{u}(t)- z^{(i)}_t\rangle \leq 0$$
	for almost every $t\in[0,T]$. Using the assumption~{\rm\ref{coercive_assumption_1}} and repeating the final part of the proof of Theorem~\ref{boundedness_sol_1} (starting from inequality~\eqref{dotu_estimate}), we can show that the solution set ${\rm Sol}(P,u_0)$ is bounded in both spaces $\mathcal{C}^0([0,T],{\mathcal H})$ and $W^{1,1}([0,T],{\mathcal H})$.
\end{proof}

\begin{remark}{\rm The boundedness of ${\rm Sol}(P,u_0)$ in Theorem~\ref{boundedness_sol_3} is also valid if instead of the assumption~{\rm \ref{lipschitz_like_assumption}} one requires that 	$C$ is inner semicontinuous at every point in its graph, i.e., for every $(t,x)\in [0,T]\times {\mathcal H}$ with $x\in C(t)$, if $U\subset {\mathcal H}$ is an open set containing $x$, then there exists a neighborhood $V$ of $t$ in $[0,T]$ such that $C(t')\cap U\neq\emptyset$ for all $t'\in V$. Indeed, for each $t\in[0,T]$, select a point $x_t\in C(t)$. The inner semicontinuity of $C$ at $(t,x_t)$ assures that there is an open neighborhood $V_t$ of $t$ in the induced topology of $[0,T]$ such that $C(t')\cap \mathbb{B}(x_t,1)\neq \emptyset$ for every $t'\in V_t$. By the compactness of $[0,T]$, from the open covering  $\{V_t\}_{t\in[0,T]}$ of the segment we can extract a finite subcover $V_{t_1},\dots,V_{t_k}$. So, for each $t\in[0,T]$, there exists an index $i\in\{1,\dots,k\}$ such that $t\in V_{t_i}$. Since $C(t)\cap \mathbb{B}(x_{t_i},1)\neq \emptyset$, there is a vector $z^{(i)}_t\in C(t)\cap \mathbb{B}(x_{t_i},1)$. Then one has $\Vert z^{(i)}_t\Vert\leq\beta$, where $\beta:= \max\big\{\Vert x_i\Vert + 1\mid i\in\{1,\dots,k\}\big\}$. Consequently, for each $t\in[0,T]$, there exists at least one point of the form $z^{(i)}_t$ such that $z^{(i)}_t\in C(t)$ and $\Vert z^{(i)}_t\Vert \leq \beta$. Then, as noted above, the usage of~{\rm\ref{coercive_assumption_1}} and the repetition of the final part of the proof of Theorem~\ref{boundedness_sol_1} yield the desired assertion.
}
\end{remark}

\begin{remark}{\rm
If a set-valued mapping is Lipschitz-like around a point in its graph then it is inner semicontinuous at that point (see, e.g.,~\cite[Proposition~3.1]{Yen_AMV87}). On the other hand, there exist locally Lipschitz-like mappings which are not continuous in the  Hausdorff distance sense (see~\cite[Example~3.1]{ATY_2021} and the discussion therein). Clearly, if the mapping $C:[0,T]\rightrightarrows {\mathcal H}$ is continuous in the  Hausdorff distance sense, then it is  inner semicontinuous at every point in its graph. The just cited example of~\cite{ATY_2021} shows that the converse is not true in general.
}
\end{remark}
\begin{remark}{\rm
		The continuity in the Hausdorff distance sense of  $C(\cdot)$ together with the assumption~{\rm\ref{coercive_assumption}} implies~{\rm \ref{coercive_assumption_1}} (see~\cite[Remark~3.2]{ATY_2021}). However, a similar implication may not hold under the inner semicontinuity of $C(\cdot)$ at every point in its graph or even under the Lipschitz-likeness of $C(\cdot)$ around every point in its graph.  
	}
\end{remark}

\section{Closedness of the Solution Set}\label{section_SP_closedness} 

First, let us show that the closedness of ${\rm Sol}(P,u_0)$ may not available even for very simple problems in finite dimensions.

\begin{proposition}
	The solution set of~\eqref{main} may not be closed in $\mathcal{C}^0([0,T],{\mathcal H})$.
\end{proposition}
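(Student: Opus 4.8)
The plan is to establish the claim by exhibiting a single concrete instance of~\eqref{main} in a finite-dimensional Hilbert space whose solution set, viewed inside $\mathcal{C}^0([0,T],{\mathcal H})$, fails to be closed. I would take (essentially) the data of the Example in Section~\ref{section_SP_boundedness}: $\mathcal{H}=\mathbb{R}^2$, $A_0=A_1=\begin{pmatrix}0&0\\0&1\end{pmatrix}$, $u_0=(0,0)$, $f(t)=(0,t)$, and $C(t)=\mathbb{R}\times\{0\}$ for all $t\in[0,T]$ (taking $T=1$, say). The first step is to describe $\mathrm{Sol}(P,(0,0))$ precisely. Writing $u=(u_1,u_2)$: as recalled in the proof of Theorem~\ref{continuity_dependence_1}, the inclusion in~\eqref{main} forces $\dot u(t)\in C(t)$ for a.e.\ $t$, hence $\dot u_2\equiv 0$ and therefore $u_2\equiv 0$; the remaining inclusion then reduces to $(0,-t)\in -{\mathcal N}_{C(t)}(\dot u(t))=\{0\}\times\mathbb{R}$, which always holds. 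Thus $\mathrm{Sol}(P,(0,0))$ is exactly the set of all $(v,0)$ with $v$ absolutely continuous on $[0,T]$ and $v(0)=0$ — this is the same computation as in the Example, only now applied to an arbitrary absolutely continuous $v$ rather than to the linear functions $v(t)=\lambda t$.

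The second step is to produce a sequence in this set that converges in $\mathcal{C}^0$ to a function outside it. I would fix a \emph{continuous} function $w:[0,T]\to\mathbb{R}$ with $w(0)=0$ that is not of bounded variation, hence not absolutely continuous; a convenient choice is the function that is piecewise linear on the partition $\{1/n\}_{n\ge 1}$ with $w(1/n)=(-1)^n/n$ and $w(0)=0$, whose total variation is bounded below by $\sum_n 1/n=\infty$ (alternatively $w(t)=\sqrt{t}\,\sin(1/t)$). Then I take $v_n$ to agree with $w$ on $[1/n,T]$ and to be linear on $[0,1/n]$, so that each $v_n$ is Lipschitz and $\|v_n-w\|_{\mathcal{C}^0}=O(1/n)\to 0$. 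By the first step, $u_n:=(v_n,0)\in\mathrm{Sol}(P,(0,0))$ for every $n$, and $u_n\to(w,0)$ in $\mathcal{C}^0([0,T],{\mathcal H})$. Since $w$ is not absolutely continuous and every solution of~\eqref{main} is, by definition, absolutely continuous, the limit $(w,0)$ is not a solution. Hence $\mathrm{Sol}(P,(0,0))$ is not closed in $\mathcal{C}^0([0,T],{\mathcal H})$, which proves the proposition.

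There is no substantial difficulty here; the only points needing a little care are verifying that $w$ genuinely fails to be absolutely continuous (done via the total-variation lower bound above) and that $v_n\to w$ uniformly (immediate from the explicit interpolation, since on $[1/n,T]$ the two functions coincide). I would also append a remark that this example is precisely the motivation for the rest of Section~\ref{section_SP_closedness}: working instead in $W^{1,1}([0,T],{\mathcal H})$, whose norm also controls $\dot u$, rules out exactly this kind of "escape of the derivative" and restores closedness of the solution set.
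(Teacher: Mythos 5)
Your proof is correct and follows essentially the same strategy as the paper's: exhibit a concrete instance in which the solution set consists of all absolutely continuous functions (in one coordinate) vanishing at $0$, then approximate uniformly a continuous function of unbounded variation by Lipschitz solutions, so that the $\mathcal{C}^0$-limit escapes the solution set. The paper simply uses the more degenerate data $\mathcal{H}=\mathbb{R}$, $A_0=A_1=0$, $C(t)=\mathbb{R}$ and the explicit function $t^2\sin(1/t^2)$, but the mechanism is identical to yours.
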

\begin{proof}
	We will prove the proposition by constructing a suitable example. Let ${\mathcal H}=\mathbb{R}$, $A_0=0$, $A_1=0$, $u_0=0$, $f(t)\equiv 0$,  and $C(t)= \mathbb{R}$ for all $t\in [0,T]$. Then, an absolutely continuous function $u:[0,T]\to\mathbb{R}$ is a solution of~\eqref{main} if and only if
	\begin{equation*}
		\left\{
		\begin{array}{l}
			0 \in {\mathcal N}_{C(t)}(\dot{u}(t))\quad \text{a.e.} \; t \in [0,T],\\ 
			u(0)=0.
		\end{array}\right. 
	\end{equation*}
	Since $C(t)=\mathbb{R}$ for all $t\in [0,T]$, ${\mathcal N}_{C(t)}(\dot{u}(t)) =\{0\}$ for any $t$ where $\dot{u}(t)$ exists. So, any absolutely continuous function $u:[0,T]\to\mathbb{R}$ with $u(0)=0$ is a solution of~\eqref{main}. For $k\in\mathbb{N}$, let
	\begin{equation*}
		x_k(t)=\begin{cases}
			t^2 \sin(\frac{1}{t^2}) \quad &\text{if} \; t \in (\frac{1}{k},T]\\
			\frac{t}{k}\sin(k^2)&\text{if}\; t\in [0,\frac{1}{k}].
		\end{cases}
	\end{equation*}
	and
	\begin{equation*}
		x(t)=\begin{cases}
			t^2 \sin(\frac{1}{t^2}) \quad &\text{if} \; t \in (0,T]\\
			0&\text{if}\; t=0.
		\end{cases}
	\end{equation*}
	Clearly, $x_k(\cdot)$ is a Lipschitz function for each $k\in\mathbb{N}$. Since $x_k(0) =0$, $x_k(\cdot)$ is a solution of \eqref{main} for every $k\in\mathbb{N}$. In addition, for any $k\in\mathbb{N}$, we have 
	\begin{align*}
		\sup\limits_{t\in[0,T]}\vert x(t)-x_k(t)\vert & = \sup\limits_{0< t\leq\frac{1}{k}}\left\vert t^2\sin\left(\frac{1}{t^2}\right)-\frac{t}{k}\sin(k^2)\right\vert\\
		&\leq \sup\limits_{0 < t\leq\frac{1}{k}}\left\vert t^2\sin\left(\frac{1}{t^2}\right)\right\vert + \sup\limits_{0 < t\leq\frac{1}{k}}\left\vert \frac{t}{k}\sin(k^2)\right\vert\\
		&\leq \sup\limits_{0 < t\leq\frac{1}{k}}t^2+ \sup\limits_{0 < t\leq\frac{1}{k}}\frac{t}{k}\\
		&=\frac{2}{k^2}.
	\end{align*}
	Therefore, $x_k$ strongly converges to $x$ in $\mathcal{C}^0([0,T],\mathbb{R})$ as $k\to\infty$. However, since $x(\cdot)$ is not of bounded variation (see \cite[Problem~2, p.~331]{kolmogorov1975}), it is not absolutely continuous. Hence, $x$ is not a solution of \eqref{main}. We have thus shown that ${\rm Sol}(P,u_0)$ is non-closed in $\mathcal{C}^0([0,T],{\mathcal H})$.
\end{proof}

Next, we will prove that the solution set of~\eqref{main} is closed if it is regarded as a subset of an appropriate space. More precisely, the following theorem confirms that the Sobolev space $W^{1,1}([0,T],{\mathcal H})$ is such a space. (This result can be explained by the well known fact that the norm of  $W^{1,1}([0,T],{\mathcal H})$ is finer than the one of $\mathcal{C}^0([0,T],{\mathcal H})$.)

\begin{theorem}\label{closedness_thm}
	If the assumption~{\rm\ref{closedconvexassumption}} is satisfied then, for any $u_0\in C(0)$, the solution set ${\rm Sol}(P,u_0)$ is closed in $W^{1,1}([0,T],{\mathcal H})$.
\end{theorem}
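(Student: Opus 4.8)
The plan is to take an arbitrary sequence $\{u_n\}\subset{\rm Sol}(P,u_0)$ converging strongly to some $u$ in $W^{1,1}([0,T],{\mathcal H})$ and to show $u\in{\rm Sol}(P,u_0)$. First I would unpack what $W^{1,1}$-convergence gives. By the definition of the $W^{1,1}$-norm, $u_n\to u$ and $\dot u_n\to\dot u$ strongly in $L^1([0,T],{\mathcal H})$; by Proposition~\ref{sobolev_w11}, $u$ is absolutely continuous with $\dot u\in L^1([0,T],{\mathcal H})$, so $u$ is an admissible candidate solution. Moreover the $W^{1,1}$-norm dominates the $\mathcal{C}^0$-norm up to a constant (an absolutely continuous $v$ satisfies $\Vert v(t)\Vert\le\frac1T\int_0^T\Vert v(s)\Vert\,ds+\int_0^T\Vert\dot v(s)\Vert\,ds$), hence $u_n\to u$ uniformly on $[0,T]$ and in particular $u(0)=\lim_n u_n(0)=u_0$, which takes care of the initial condition in~\eqref{main}.

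Next I would extract pointwise information along a subsequence. Applying Lemma~\ref{stronglp_pointwise} to $\{\dot u_n\}$, there is a subsequence (not relabeled) with $\dot u_n(t)\to\dot u(t)$ for a.e.\ $t$. For each fixed $n$, since $u_n$ solves~\eqref{main} and $C(t)$ is convex, the inclusion is equivalent (exactly as in the proof of Theorem~\ref{continuity_dependence_1}) to $\dot u_n(t)\in C(t)$ together with
\begin{equation*}
\langle A_1\dot u_n(t)+A_0u_n(t)-f(t),\ \dot u_n(t)-z\rangle\le 0\qquad\forall z\in C(t),
\end{equation*}
for all $t$ outside some null set $E_n$. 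Setting $E:=\bigcap_n E_n^{c}\cap\{t:\dot u_n(t)\to\dot u(t)\}$ (a countable intersection of full-measure sets, hence full-measure) gives a single set on which, for every $t\in E$: $\dot u_n(t)\in C(t)$ for all $n$, the displayed variational inequality holds for all $n$ and all $z\in C(t)$, and $\dot u_n(t)\to\dot u(t)$.

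Then I would pass to the limit at a fixed $t\in E$. Since $C(t)$ is closed by~\ref{closedconvexassumption} and $\dot u_n(t)\in C(t)$ with $\dot u_n(t)\to\dot u(t)$, we get $\dot u(t)\in C(t)$. Because $A_0,A_1$ are bounded linear operators and $u_n(t)\to u(t)$, $\dot u_n(t)\to\dot u(t)$ strongly in ${\mathcal H}$, the inner product being jointly continuous for strong convergence, letting $n\to\infty$ in the variational inequality yields
\begin{equation*}
\langle A_1\dot u(t)+A_0u(t)-f(t),\ \dot u(t)-z\rangle\le 0\qquad\forall z\in C(t).
\end{equation*}
Together with $\dot u(t)\in C(t)$, this is precisely $A_1\dot u(t)+A_0u(t)-f(t)\in-{\mathcal N}_{C(t)}(\dot u(t))$. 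As this holds for all $t\in E$, i.e.\ a.e.\ on $[0,T]$, and $u(0)=u_0$, the function $u$ is a solution; since $\{u_n\}$ was arbitrary, ${\rm Sol}(P,u_0)$ is closed in $W^{1,1}([0,T],{\mathcal H})$.

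I do not expect any step to be a genuine obstacle; the only point needing care is the bookkeeping of the $n$-dependent null sets, dealt with by the countable-intersection argument, and the use of Lemma~\ref{stronglp_pointwise} to upgrade $L^1$-convergence of the velocities to a.e.\ convergence. The limit passage in the variational inequality is immediate here because convergence is \emph{strong}; with only weak convergence of $\dot u_n$ it would be the delicate step, and this is exactly why the statement is true in $W^{1,1}$ but fails in $\mathcal{C}^0$.
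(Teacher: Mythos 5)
Your proposal is correct and follows essentially the same route as the paper: strong $W^{1,1}$-convergence gives $L^1$-convergence of $u_n$ and $\dot u_n$, Lemma~\ref{stronglp_pointwise} upgrades the latter to a.e.\ pointwise convergence along a subsequence, and the differential inclusion is recovered by passing to the limit in the equivalent variational inequality after uniting the countably many null sets. The only cosmetic differences are that you obtain $u(0)=u_0$ via the embedding of $W^{1,1}([0,T],{\mathcal H})$ into $\mathcal{C}^0([0,T],{\mathcal H})$ whereas the paper uses a direct $\liminf$ estimate on $\int_0^T\Vert u_k-u\Vert\,d\tau$, and that you fold the verification $\dot u(t)\in C(t)$ into the same pointwise limit instead of first proving, as the paper does, that $\{\varphi\in L^1([0,T],{\mathcal H})\mid \varphi(t)\in C(t)\ \text{a.e.}\}$ is closed in $L^1$.
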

\begin{proof}
	 Let $u_0\in C(0)$ be given. Suppose that  $\{u_k\}\subset {\rm Sol}(P,u_0)$ is a sequence converging strongly in $W^{1,1}([0,T],{\mathcal H})$ to $u$ as $k\to\infty$. Then, $u$ is an absolutely continuous function. To prove that $u$ satisfies the initial condition in~\eqref{main}, we can argue as follows. Since the norm in $W^{1,1}([0,T],{\mathcal H})$ is given by~\eqref{norm_in_W1,1}, we have 
	 \begin{equation}\label{L1_norm_u_k}
	 	\lim\limits_{k\to\infty}\displaystyle\int_0^T\Vert u_k(\tau)-u(\tau)\Vert d\tau =0
	 \end{equation} and 
	 \begin{equation}\label{L1_norm_dot_u_k}
	 	\lim\limits_{k\to\infty}\displaystyle\int_0^T\Vert \dot{u}_k(\tau)-\dot{u}(\tau)\Vert d\tau = 0.
	 \end{equation} 
	 Note that $u_k(t)=u_k(0)+\displaystyle\int_0^{t}\dot{u}_k(\tau)d\tau$ and $u(t)=u(0)+\displaystyle\int_0^{t}\dot{u}(\tau)d\tau$ for every $t\in [0,T]$ and for all $k\in\mathbb N$ (see~\cite[Remark~3.4(c)]{ATY_2021}). Hence, from~\eqref{L1_norm_u_k}, \eqref{L1_norm_dot_u_k}, and \cite[Theorem~4, p.~46]{diestel1977} it follows that 
	 \begin{align*}
	 	0&=\lim\limits_{k\to\infty}\displaystyle\int_0^T\Vert u_k(\tau)-u(\tau)\Vert d\tau\\
	 	&=\lim\limits_{k\to\infty}\left[\displaystyle\int_0^T\Big\Vert u_k(0)-u(0) + \displaystyle\int_0^\tau (\dot{u}_k(s)- \dot{u}(s))ds\Big\Vert d\tau\right]\\
	 	&\geq \liminf\limits_{k\to\infty}\left[\displaystyle\int_0^T\left(\Vert u_k(0)-u(0) \Vert -\left\|\displaystyle\int_0^\tau (\dot{u}_k(s)- \dot{u}(s))ds\right\|\right) d\tau\right]\\
	 	&\geq \liminf\limits_{k\to\infty}\left[\displaystyle\int_0^T\left(\Vert u_k(0)-u(0) \Vert -\displaystyle\int_0^T \|\dot{u}_k(s)- \dot{u}(s)\|ds\right) d\tau\right]\\
	 	& = \liminf\limits_{k\to\infty}\left[T\Vert u_0-u(0) \Vert-T\displaystyle\int_0^T \Vert \dot{u}_k(s)- \dot{u}(s)\Vert ds\right]\\
	 	&=T\Vert u_0-u(0) \Vert.
	 \end{align*}
	 So, $u(0)=u_0$.  
	 
	 It remains to prove that $u$ satisfies the differential inclusion in~\eqref{main}. 
	 
	 Setting ${\mathcal C}=\left\{\varphi\in L^1([0,T],{\mathcal H}) \mid \varphi(t)\in C(t) \text{ a.e.} \; t\in[0,T]\right\},$  we will prove that ${\mathcal C}$ is closed in $L^1([0,T],{\mathcal H})$. Let $\{\varphi_m\}\subset D$ be a sequence converging strongly in $L^1([0,T],{\mathcal H})$ to a function $\psi$. Thanks to Lemma~\ref{stronglp_pointwise}, we can find a subsequence $\{\varphi_{m_j}\}$ of $\{\varphi_m\}$ such that $\varphi_{m_j}(t)$ converges to $\psi(t)$ for almost every $t\in[0,T]$. Since $\varphi_{m_j}(t)\in C(t)$ a.e. $t\in[0,T]$ and $C(t)$ is closed, we have $\psi(t)\in C(t)$ a.e. $t\in[0,T]$. Hence, one has $\psi\in {\mathcal C}$. This shows that ${\mathcal C}$ is closed in $L^1([0,T],{\mathcal H})$. 
	 
	 Since $\{u_k\}\subset {\rm Sol}(P,u_0)$, we have $\dot{u}_k\in {\mathcal C}$ for all $k\in\mathbb N$. From~\eqref{L1_norm_dot_u_k} it follows that $\dot{u}\in {\mathcal C}$. So, $\dot{u}(t)\in C(t)$ for almost every $t\in[0,T]$. As $C(t)$ is convex for all $t\in[0,T]$, the inclusion  $A_1 \dot{u}_k(t)+A_0 u_k(t)-f(t) \in -{\mathcal N}_{C(t)}(\dot{u}_k(t))$ is equivalent to
	 \begin{equation}\label{VI_uk_simple}\langle A_1 \dot{u}_k(t)+A_0 u_k(t)-f(t), \dot{u}_k(t)-z\rangle\leq 0\quad\forall z\in C(t).
	 \end{equation} For each $k\in\mathbb N$, \eqref{VI_uk_simple} holds for almost every $t\in [0,T]$. Thus, there exists a subset $D_k\subset [0,T]$ having zero Lebesgue measure that \eqref{VI_uk_simple} holds for every $t$ in $[0,T]\setminus D_k$. Putting $D=\bigcup_{k\in\mathbb N}D_k$, we see that $D$ is a set of zero Lebesgue measure and \eqref{VI_uk_simple} holds for all $k\in\mathbb N$ and for every $t$ in  $[0,T]\setminus D$. For each $t$ from $[0,T]\setminus D$, passing the inequality in~\eqref{VI_uk_simple} to the limit yields
$$\langle A_1 \dot{u}(t)+A_0 u(t)-f(t), \dot{u}(t)-z\rangle\leq 0 \quad\forall z\in C(t).$$
Thus, for almost every $t\in[0,T]$, one has $A_1 \dot{u}(t)+A_0 u(t)-f(t) \in -{\mathcal N}_{C(t)}(\dot{u}(t))$. 

We have thus proved that $u\in{\rm Sol}(P,u_0)$ and, therefore, established the desired closedness of ${\rm Sol}(P,u_0)$ in $W^{1,1}([0,T],\mathcal{H})$.
\end{proof}

\section{Convexity of the Solution Set}\label{section_SP_convexity}
As the normal cone in the sense of convex analysis to a convex set can be presented in a variational way, sweeping processes and variational inequalities are closely related. So, the convexity of the solution set of a sweeping process may have some connections with that property of the solution set of a variational inequality. 

\begin{theorem}\label{convexity_thm_1}
	If the assumption~{\rm \ref{closedconvexassumption}} is fulfilled and $A_0=0$, then ${\rm Sol}(P,u_0)$ is convex for every $u_0\in C(0)$.
\end{theorem}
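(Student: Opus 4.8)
The plan is to exploit the fact that when $A_0=0$, problem~\eqref{main} reduces to a differential inclusion in which only the velocity $\dot u(t)$ appears in the variational part, so the "state" $u(t)$ decouples from the inclusion except through the integral relation $u(t)=u_0+\int_0^t\dot u(\tau)\,d\tau$. Concretely, with $A_0=0$ and $C(t)$ closed convex, an absolutely continuous $u$ with $u(0)=u_0$ solves~\eqref{main} if and only if $\dot u(t)\in C(t)$ a.e.\ and
\[
\langle A_1\dot u(t)-f(t),\,\dot u(t)-z\rangle\le 0\qquad\forall z\in C(t),\ \text{a.e. }t\in[0,T].
\]
First I would fix $u_0\in C(0)$, take two solutions $u,v\in{\rm Sol}(P,u_0)$ and $\lambda\in[0,1]$, set $w=\lambda u+(1-\lambda)v$, and observe that $w$ is absolutely continuous with $w(0)=u_0$ and $\dot w(t)=\lambda\dot u(t)+(1-\lambda)\dot v(t)$ a.e. Since $C(t)$ is convex and $\dot u(t),\dot v(t)\in C(t)$ a.e., we get $\dot w(t)\in C(t)$ a.e.

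Next I would verify the variational inequality for $w$. Writing the inequalities for $u$ and $v$ and testing both with an arbitrary fixed $z\in C(t)$ gives, a.e.\ $t$,
\[
\langle A_1\dot u(t)-f(t),\,\dot u(t)-z\rangle\le 0,\qquad
\langle A_1\dot v(t)-f(t),\,\dot v(t)-z\rangle\le 0.
\]
The obstacle is that a convex combination of these two inequalities yields a bound on $\langle A_1(\lambda\dot u+(1-\lambda)\dot v)-f,\ \lambda\dot u+(1-\lambda)\dot v - z\rangle$ only up to the cross-term coming from the quadratic form $\langle A_1\cdot,\cdot\rangle$; precisely, $\lambda(\text{first})+(1-\lambda)(\text{second})$ produces
\[
\langle A_1\dot w(t)-f(t),\,\dot w(t)-z\rangle\le -\lambda(1-\lambda)\langle A_1(\dot u(t)-\dot v(t)),\,\dot u(t)-\dot v(t)\rangle,
\]
and since $A_1$ is positive semidefinite the right-hand side is $\le 0$. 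This is exactly the standard monotonicity trick (the map $x\mapsto A_1x-f(t)$ is monotone on $C(t)$, and the solution set of a variational inequality governed by a monotone operator over a convex set is convex), and it closes the argument: $\langle A_1\dot w(t)-f(t),\dot w(t)-z\rangle\le 0$ for all $z\in C(t)$, a.e.\ $t$, which is equivalent to $A_1\dot w(t)+A_0 w(t)-f(t)\in-\mathcal N_{C(t)}(\dot w(t))$ since $A_0=0$.

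Finally I would assemble the pieces: $w$ is absolutely continuous, $w(0)=u_0$, and $w$ satisfies the differential inclusion a.e., hence $w\in{\rm Sol}(P,u_0)$, proving convexity. The only genuinely delicate point is the measurability/integrability bookkeeping — that $\dot w=\lambda\dot u+(1-\lambda)\dot v$ a.e.\ (immediate from linearity of the Fréchet derivative where both exist, and both exist a.e.), and that $w\in W^{1,1}([0,T],\mathcal H)$ — but these are routine given Proposition~\ref{sobolev_w11} and the fact that $W^{1,1}$ is a vector space. I expect no real obstacle beyond carefully noting that the exceptional null sets for $u$ and $v$ can be unioned so that the pointwise inequalities hold simultaneously a.e.
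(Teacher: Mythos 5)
Your proposal is correct and follows essentially the same route as the paper: reduce the problem (using $A_0=0$ and the convexity of $C(t)$) to the pointwise variational inequality for $\dot u(t)$ governed by the monotone affine operator $z\mapsto A_1z-f(t)$, and conclude that the set of velocities solving it is convex. The only cosmetic difference is that the paper obtains this convexity by citing Minty's lemma, whereas you verify it directly through the quadratic identity $\lambda g(\dot u)+(1-\lambda)g(\dot v)-g(\dot w)=\lambda(1-\lambda)\langle A_1(\dot u-\dot v),\dot u-\dot v\rangle\ge 0$; both arguments are sound, and your bookkeeping of the null sets and of $\dot w=\lambda\dot u+(1-\lambda)\dot v$ matches what the paper does.
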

\begin{proof}
	Let $u_0\in C(0)$ be taken arbitrarily. It suffices to consider the case where ${\rm Sol}(P,u_0)$ is nonempty. Under the assumption~{\rm \ref{closedconvexassumption}} and the condition $A_0=0$, an absolutely continuous function $u$ belongs to ${\rm Sol}(P,u_0)$ if and only if $u(0)=u_0$ and
	$$\langle A_1 \dot{u}(t)-f(t), y-\dot{u}(t)\rangle\geq 0 \quad\forall y\in C(t)$$
	for a.e. $t\in[0,T]$. The latter means that $z(t):=\dot{u}(t)$ is a solution of the variational inequality
	\begin{equation}\label{VI_A1}
		\langle F(z,t),y-z\rangle \geq 0\quad\forall y\in C(t)
	\end{equation}
	for a.e. $t\in[0,T]$, where $F(z,t):=A_1z-f(t)$. By the assumed positive semidefiniteness of $A_1$, one has
	$$\langle F(z',t)-F(z,t),z'-z\rangle = \langle A_1(z'-z),z'-z\rangle \geq 0$$
	for every $z,z'\in\mathcal{H}$. Hence, $F(\cdot, t): \mathcal{H}\to\mathcal{H}$ is a monotone operator. Moreover, since the linear operator $A_1$ is bounded, $F(\cdot, t)$ is continuous. Therefore, applying Minty's lemma~\cite[Lemma~1.5]{KS_1980} for the monotone variational inequality~\eqref{VI_A1}, we can assert that the solution set of~\eqref{VI_A1} is closed an convex for every $t\in[0,T]$. Consequently, if $u$, $v$ are two elements of ${\rm Sol}(P,u_0)$ and $\lambda\in(0,1)$ is given arbitrarily, $(1-\lambda)\dot u(t)+\lambda \dot v(t)$ is a solution of~\eqref{VI_A1} for almost every $t\in[0,T]$.  Since $t\mapsto(1-\lambda)\dot{u}(t)+\lambda\dot{v}(t)$ is Bochner integrable (see~\cite[Proposition 1.4.17]{cazenave_haraux_1998}), the formula $w(t):=u_0+\displaystyle\int_0^t\left[(1-\lambda)\dot{u}(\tau)+\lambda\dot{v}(\tau)\right]d\tau$ defines an absolutely continuous function. Clearly, $w(0)=u_0$. In addition, we have $\dot{w}(t)=(1-\lambda)\dot{u}(t)+\lambda\dot{v}(t)$
	for a.e. $t\in[0,T]$ (see, e.g.,~\cite[Remark~3.4(d)]{ATY_2021}). So, $w(t)$ is a solution of~\eqref{VI_A1} for a.e. $t\in[0,T]$. This implies that 
	$$A_1 \dot{w}(t)+A_0 w(t)-f(t) \in -{\mathcal N}_{C(t)}(\dot{w}(t))\quad\text{a.e. }t\in[0,T].$$
	Hence, $w\in {\rm Sol}(P,u_0)$. The convexity of ${\rm Sol}(P,u_0)$ has been proved.
\end{proof}

The kernel of the operator $A_0:\mathcal{H}\to\mathcal{H}$ plays an important role in the forthcoming results. Recall that ${\rm ker \ } A_0:=\{x\in\mathcal{H}\mid A_0x=0\}$. Note that the quadratic form $\varphi(y):=\langle A_0y,y\rangle$ is Fr\'echet differentiable on $\mathcal{H}$ because $A_0$ is bounded (see, e.g.,~\cite[Proposition~2.1]{Yen_Yang_JOTA2018}). Since $\langle A_0y,y\rangle\geq 0$ for all $y\in\mathcal{H}$, a vector $x\in\mathcal{H}$ satisfies the equality $\langle A_0x,x\rangle=0$ if and only if $x$ solves the optimization problem $\min\{\varphi(y)\mid y\in\mathcal{H}\}$. If  $x$ is a solution of the latter, then by the Fermat rule one has $\nabla\varphi(x)=0$, i.e., $A_0x=0$. Conversely, if $A_0x=0$ then $\varphi(x)=0$. Therefore, we have 
\begin{equation}\label{ker_A0}
	\left\{x\in\mathcal{H}\mid \langle A_0x,x\rangle=0\right\} ={\rm ker \ }A_0.
\end{equation}

Under a mild assumption, using one solution $u$ of~\eqref{main}, we can construct a closed convex set $\mathcal{K}$ in $W^{1,1}([0,T],\mathcal{H})$, such that the solution set ${\rm Sol}(P,u_0)$ is contained in $u+\mathcal{K}$. Thus, the closed convex set $u+\mathcal{K}$ is an outer estimate for ${\rm Sol}(P,u_0)$. The estimate is sharp, because in some cases it holds as an equality (see Theorem~\ref{convexity_thm_2} below).    
 
\begin{theorem}\label{sol_character}
Suppose that~{\rm\ref{closedconvexassumption}} is satisfied. For any $u_0\in C(0)$, if ${\rm Sol}(P,u_0)$ is nonempty and $u$ is a selected solution of~\eqref{main}, then 
\begin{equation}\label{sol_subset_uK}
	{\rm Sol}(P,u_0) \subset u+\mathcal{K},
\end{equation}
where 
\begin{equation}\label{set_K}
	\mathcal{K}:= \left\{y\in W^{1,1}([0,T],\mathcal{H})\mid y(0)=0,\ \dot{y}(t)\in (C(t)-\dot{u}(t))\cap {\rm ker \ }A_0\; \;{\rm a.e. \ }t\in[0,T]\right\}
\end{equation} is a closed convex set.
\end{theorem}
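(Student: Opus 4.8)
The plan is to establish the three assertions in turn: that $\mathcal{K}$ is convex, that $\mathcal{K}$ is closed in $W^{1,1}([0,T],\mathcal{H})$, and that $v - u \in \mathcal{K}$ for every $v \in {\rm Sol}(P,u_0)$. Convexity is immediate: for a.e.\ $t$ the set $(C(t)-\dot u(t))\cap {\rm ker}\,A_0$ is the intersection of a translate of the convex set $C(t)$ with the linear subspace ${\rm ker}\,A_0$, hence convex; and the two conditions defining $\mathcal{K}$ — namely $y(0)=0$ and $\dot y(t)$ lying in that convex set a.e. — are preserved under convex combinations because $\dot y = (1-\lambda)\dot y_1 + \lambda \dot y_2$ whenever $y = (1-\lambda)y_1 + \lambda y_2$.

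For closedness I would take $\{y_k\}\subset\mathcal{K}$ converging strongly to $y$ in $W^{1,1}([0,T],\mathcal{H})$, so that $y\in W^{1,1}([0,T],\mathcal{H})$ is absolutely continuous, $y_k\to y$ in $L^1([0,T],\mathcal{H})$, and $\dot y_k\to\dot y$ in $L^1([0,T],\mathcal{H})$. That $y(0)=0$ follows by the argument already used in the proof of Theorem~\ref{closedness_thm}: writing $y_k(t)=y_k(0)+\int_0^t\dot y_k$, $y(t)=y(0)+\int_0^t\dot y$, and bounding $\int_0^T\|y_k(\tau)-y(\tau)\|\,d\tau$ from below by $T\|y_k(0)-y(0)\|-T\int_0^T\|\dot y_k-\dot y\|$, then letting $k\to\infty$ and recalling $y_k(0)=0$. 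For the pointwise constraint, Lemma~\ref{stronglp_pointwise} furnishes a subsequence with $\dot y_{k_j}(t)\to\dot y(t)$ for a.e.\ $t$; since $C(t)-\dot u(t)$ is closed and ${\rm ker}\,A_0$ is a closed subspace, their intersection is closed, so $\dot y(t)\in(C(t)-\dot u(t))\cap {\rm ker}\,A_0$ a.e., whence $y\in\mathcal{K}$.

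For the inclusion, fix $v\in{\rm Sol}(P,u_0)$ and set $w:=v-u\in W^{1,1}([0,T],\mathcal{H})$. Then $w(0)=u_0-u_0=0$, and, exactly as in the proof of Theorem~\ref{continuity_dependence_1}, the normal-cone inclusion forces $\dot u(t),\dot v(t)\in C(t)$ a.e., so $\dot w(t)\in C(t)-\dot u(t)$ a.e. To get $\dot w(t)\in {\rm ker}\,A_0$, I would rewrite the inclusions for $u$ and $v$ as variational inequalities over $C(t)$, test the one for $u$ with $z=\dot v(t)$ and the one for $v$ with $z=\dot u(t)$, add them, and discard the nonnegative term $\langle A_1(\dot u-\dot v),\dot u-\dot v\rangle$ (positive semidefiniteness of $A_1$), arriving at $\langle A_0(u(t)-v(t)),\dot u(t)-\dot v(t)\rangle\le 0$ a.e. Integrating this over $[0,t]$, using the identity $\frac{d}{d\tau}\langle A_0(u-v),u-v\rangle = 2\langle A_0(u-v),\dot u-\dot v\rangle$ together with $u(0)=v(0)$ as in Theorem~\ref{continuity_dependence_1}, gives $\langle A_0 w(t),w(t)\rangle\le 0$ for all $t$; since $A_0$ is positive semidefinite, in fact $\langle A_0 w(t),w(t)\rangle = 0$ for every $t$, so by~\eqref{ker_A0}, $w(t)\in {\rm ker}\,A_0$ for all $t\in[0,T]$.

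The final step — and the only delicate one — is to pass from ``$w(t)\in {\rm ker}\,A_0$ for every $t$'' to ``$\dot w(t)\in {\rm ker}\,A_0$ for a.e.\ $t$''. I would argue that, $A_0$ being a bounded linear operator, the composition $A_0 w(\cdot)$ is absolutely continuous with a.e.\ derivative $A_0\dot w(t)$ (equivalently, each difference quotient of $w$ lies in the closed subspace ${\rm ker}\,A_0$, hence so does its limit); since $A_0 w\equiv 0$, we get $A_0\dot w(t)=0$ a.e., i.e.\ $\dot w(t)\in {\rm ker}\,A_0$ a.e. Combining $w(0)=0$, $\dot w(t)\in C(t)-\dot u(t)$ a.e., and $\dot w(t)\in {\rm ker}\,A_0$ a.e.\ yields $w\in\mathcal{K}$, that is, $v\in u+\mathcal{K}$; as $v\in{\rm Sol}(P,u_0)$ was arbitrary, \eqref{sol_subset_uK} follows. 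The main obstacle is precisely this transfer of the kernel membership from the function to its derivative; everything else is a reprise of the monotonicity estimate behind Theorem~\ref{continuity_dependence_1} and a direct application of Lemma~\ref{stronglp_pointwise}.
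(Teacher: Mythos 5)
Your proposal is correct and follows essentially the same route as the paper: the same monotonicity argument (test the two variational inequalities against each other, drop the $A_1$ term, integrate, use $u(0)=v(0)$ and positive semidefiniteness to conclude $\langle A_0w(t),w(t)\rangle=0$, hence $w(t)\in{\rm ker}\,A_0$ via~\eqref{ker_A0}), followed by the same transfer of kernel membership to the derivative. In fact you supply more detail than the paper does for the closedness and convexity of $\mathcal{K}$ (which the paper leaves as ``easily verified'') and for the difference-quotient justification of $\dot w(t)\in{\rm ker}\,A_0$ a.e., which the paper merely asserts.
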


\begin{proof} Select a solution $u$ of~\eqref{main}. Let $v\in{\rm Sol}(P,u_0)$ be chosen arbitrarily. Since~{\rm\ref{closedconvexassumption}} is fulfilled, we have
		\begin{equation*}
	\begin{cases}
	\langle A_1 \dot{u}(t)+A_0 u(t)-f(t), \dot{u}(t) -z\rangle \leq 0\quad \forall z\in C(t),\\
	\langle A_1 \dot{v}(t)+A_0 v(t)-f(t), \dot{v}(t) -z\rangle \leq 0\quad \forall z\in C(t)
	\end{cases}
	\end{equation*}
	for a.e. $t\in [0,T]$. As $\dot{u}(t)$ and $\dot{v}(t)$ belong to $C(t)$ for almost every $t\in [0,T]$, the latter implies that
	$$\langle A_1 \dot{u}(t)+A_0 u(t)-f(t), \dot{u}(t) - \dot{v}(t)\rangle \leq 0$$
	and
	$$\langle A_1 \dot{v}(t)+A_0 v(t)-f(t), \dot{v}(t) - \dot{u}(t) \rangle \leq 0$$ for a.e. $t\in[0,T]$. From the last inequalities one gets 
	$$\langle A_1 (\dot{u}(t)-\dot{v}(t))+A_0(u(t)- v(t)), \dot{u}(t) - \dot{v}(t)\rangle \leq 0$$
	for a.e. $t\in [0,T]$. As $A_1$ is positive semidefinite, it follows that
	$$\langle A_0(u(t)- v(t)), \dot{u}(t) - \dot{v}(t)\rangle \leq 0$$
	for a.e. $t\in [0,T]$. Integrating both sides of the last inequality and applying~\cite[Remarks~11.23(c)]{rudinprinciples} yield
	$$	\int_0^t\langle A_0 (u(\tau)- v(\tau)),  \dot{u}(\tau)- \dot{v}(\tau)\rangle d\tau\leq 0\quad \forall t\in [0,T].$$	
	As it has been noted in the proof of Theorem~\ref{continuity_dependence_1}, this implies
	$$\langle A_0 (u(t)-v(t)), u(t)- v(t)\rangle- \langle A_0 (u(0)-v(0)), u(0)- v(0)\rangle \leq 0\quad \forall t\in [0,T].$$
	Since $u(0)=v(0)$, the latter means that 
	$\langle A_0 (u(t)-v(t)), u(t)- v(t)\rangle \leq 0$ for all $t\in [0,T].$
	So, by the positive semidefiniteness of $A_0$, we obtain 
	$$\langle A_0 (u(t)-v(t)), u(t)- v(t)\rangle =0\quad \forall t\in [0,T].$$
	Therefore, setting $x(t):=v(t)-u(t)$, $t\in[0,T]$, by~\eqref{ker_A0} we have $x(t)\in{\rm ker \ }A_0$ for all $t\in[0,T]$. It is clear that $x(0)=v(0)-u(0)=0$ and $\dot{x}(t)=\dot{v}(t)-\dot{u}(t)\in C(t)-\dot{u}(t)$ for a.e. $t\in[0,T]$. Since $x(\cdot)$ is an absolutely continuous function, from the condition $A_0x(t)=0$ for all $t\in[0,T]$ we deduce that $A_0\dot{x}(t)=0$ for a.e. $t\in[0,T]$. Hence, $\dot{x}\in\mathcal{K}$. We have thus shown that~\eqref{sol_subset_uK} is valid.
	 The convexity and closedness of $\mathcal{K}$ can be easily verified by using the convexity and closedness of $C(t)$ for all $t\in [0,T]$.
\end{proof}

In the next theorem, we investigate the convexity of the solution set in the case where $A_0\neq 0$.
\begin{theorem}\label{convexity_thm_2}
Suppose that~{\rm\ref{closedconvexassumption}} is satisfied, $A_1=0$, and $f(t)\perp{\rm ker \ }A_0$ (i.e., $\langle f(t),x\rangle =0$ for every $x\in{\rm ker \ }A_0$) for all $t\in[0,T]$. Then, ${\rm Sol}(P,u_0)$ is convex for every $u_0\in C(0)$.
\end{theorem}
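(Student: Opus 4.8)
The plan is to prove the stronger statement that ${\rm Sol}(P,u_0)$ coincides with the closed convex set $u+\mathcal{K}$ furnished by Theorem~\ref{sol_character}, where $u$ is any selected solution of~\eqref{main} and $\mathcal{K}$ is the set~\eqref{set_K}; the convexity of ${\rm Sol}(P,u_0)$ is then immediate, since a translate of the convex set $\mathcal{K}$ is convex. If ${\rm Sol}(P,u_0)=\emptyset$ there is nothing to prove, so I would fix $u_0\in C(0)$ with ${\rm Sol}(P,u_0)\neq\emptyset$ and select a solution $u$. The inclusion ${\rm Sol}(P,u_0)\subset u+\mathcal{K}$ is exactly~\eqref{sol_subset_uK} from Theorem~\ref{sol_character} (whose hypothesis~\ref{closedconvexassumption} is in force and which only uses that $A_1$ is positive semidefinite, hence applies when $A_1=0$), so the real work lies in the reverse inclusion $u+\mathcal{K}\subset{\rm Sol}(P,u_0)$.

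For the reverse inclusion, I would fix $y\in\mathcal{K}$ and set $v:=u+y$. Since $u$ and $y$ are absolutely continuous, so is $v$, with $\dot v(t)=\dot u(t)+\dot y(t)$ for a.e.\ $t$, and $v(0)=u_0$ because $y(0)=0$. As $u$ solves~\eqref{main}, the inclusion~\eqref{basic_inclusion} together with the convexity of $C(t)$ gives $\dot u(t)\in C(t)$ a.e., while $\dot y(t)\in C(t)-\dot u(t)$ a.e.\ by the definition of $\mathcal{K}$; hence $\dot v(t)\in C(t)$ a.e. Next I would check that $y(t)\in{\rm ker}\,A_0$ for every $t\in[0,T]$: one has $y(t)=\int_0^t\dot y(\tau)\,d\tau$, the integrand satisfies $\dot y(\tau)\in{\rm ker}\,A_0$ a.e., and since $A_0$ is a bounded linear operator the Bochner integral commutes with it, so $A_0y(t)=\int_0^t A_0\dot y(\tau)\,d\tau=0$. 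Consequently $A_0v(t)=A_0u(t)$ for all $t\in[0,T]$.

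It remains to verify the differential inclusion for $v$. Because $A_1=0$ and $C(t)$ is convex, this amounts to showing that for a.e.\ $t\in[0,T]$ one has $\langle A_0v(t)-f(t),\,z-\dot v(t)\rangle\geq 0$ for all $z\in C(t)$. Writing $z-\dot v(t)=(z-\dot u(t))-\dot y(t)$ and using $A_0v(t)=A_0u(t)$, I would split this inner product as $\langle A_0u(t)-f(t),\,z-\dot u(t)\rangle-\langle A_0u(t)-f(t),\,\dot y(t)\rangle$; the first term is $\geq 0$ because $u$ solves~\eqref{main} (in the variational-inequality form). The second term vanishes: since $A_0$ is symmetric and $\dot y(t)\in{\rm ker}\,A_0$ a.e., one has $\langle A_0u(t),\dot y(t)\rangle=\langle u(t),A_0\dot y(t)\rangle=0$, and since $f(t)\perp{\rm ker}\,A_0$ and $\dot y(t)\in{\rm ker}\,A_0$, one has $\langle f(t),\dot y(t)\rangle=0$. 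Therefore $\langle A_0v(t)-f(t),\,z-\dot v(t)\rangle=\langle A_0u(t)-f(t),\,z-\dot u(t)\rangle\geq 0$, so $v\in{\rm Sol}(P,u_0)$, establishing $u+\mathcal{K}\subset{\rm Sol}(P,u_0)$, hence equality, hence convexity of ${\rm Sol}(P,u_0)$.

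The main obstacle is the last step, the verification of the variational inequality for $v$: the cross term $\langle A_0u(t)-f(t),\dot y(t)\rangle$ is precisely where both structural hypotheses are consumed — the symmetry of $A_0$ disposes of the $A_0u(t)$ part, and the orthogonality assumption $f(t)\perp{\rm ker}\,A_0$ disposes of the $f(t)$ part — and one must also take care to pass from the a.e.\ membership $\dot y(t)\in{\rm ker}\,A_0$ to the everywhere membership $y(t)\in{\rm ker}\,A_0$, which is what licenses replacing $A_0v(t)$ by $A_0u(t)$. (For the bare convexity assertion one could alternatively argue directly: given two solutions $u,v$, the proof of Theorem~\ref{sol_character} yields $A_0u(t)=A_0v(t)$, after which a convex combination of $u$ and $v$ is seen to solve~\eqref{main} by linearity of the variational inequality; but the equality ${\rm Sol}(P,u_0)=u+\mathcal{K}$ is the sharper conclusion and the one that uses the hypothesis on $f$.)
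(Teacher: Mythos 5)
Your proposal is correct and follows essentially the same route as the paper: both establish the equality ${\rm Sol}(P,u_0)=u+\mathcal{K}$ by combining the outer estimate of Theorem~\ref{sol_character} with a direct verification that $v=u+y$ solves the variational inequality, using $A_0y(t)=0$ (obtained by commuting $A_0$ with the Bochner integral), the symmetry of $A_0$, and the orthogonality $f(t)\perp{\rm ker}\,A_0$ to kill the cross term. The argument and the way the hypotheses are consumed coincide with the paper's proof.
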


\begin{proof}
	Let $u_0\in C(0)$ be given arbitrarily and $u$ be a solution of~\eqref{main}. By Theorem~\ref{sol_subset_uK}, the inclusion~\eqref{sol_subset_uK}, where the set $\mathcal{K}$ is defined in~\eqref{set_K}, holds. Take any $x\in\mathcal{K}$. Then, the function $v$ defined by setting $v(t)=u(t)+x(t)$, $t\in [0,T]$, is a solution of~\eqref{main}. Indeed, for almost every $t\in[0,T]$, one has $$\dot{v}(t)=\dot{u}(t)+\dot x(t)\in \dot{u}(t) + (C(t)-\dot{u}(t))=C(t).$$ Note that $v(0)=u(0)+x(0)=u_0.$ Since $\dot{x}(t)\in{\rm ker \ }A_0$ for a.e. $t\in[0,T]$, $x(0)=0$, and the linear operator $A_0$ is bounded, by~\cite[Proposition~1.4.22]{cazenave_haraux_1998} we have
		\begin{equation}\label{A_0x(t)} A_0x(t)=A_0\left(x(0)+\displaystyle\int_0^t \dot{x}(\tau)d\tau\right)=A_0\displaystyle\int_0^t \dot{x}(\tau)d\tau=\displaystyle\int_0^t A_0\dot{x}(\tau)d\tau=0
		\end{equation}
	for all $t\in [0,T]$. By $\Omega$ we denote the set of all  $t\in[0,T]$ where the derivatives $\dot{u}(t)$, $\dot{x}(t)$ exist, $\dot{x}(t)\in(C(t)-\dot{u}(t))\cap {\rm ker\ }A_0$, and $A_0 u(t)-f(t)\in-\mathcal{N}_{C(t)}(\dot{u}(t))$. By our assumptions, $\Omega$ is a subset of full measure of $[0,T]$. For any $t\in\Omega$ and for any $z\in C(t)$, by~\eqref{A_0x(t)} we have
	\begin{equation*}
		\begin{array}{rl}
		\langle A_0 v(t)-f(t), z- \dot{v}(t)\rangle&=\langle A_0(u(t)+x(t))-f(t), z- (\dot{u}(t)+\dot{x}(t))\rangle\\
		&=\langle A_0u(t)-f(t), z- (\dot{u}(t)+\dot{x}(t))\\
		&=\langle A_0 u(t)-f(t), z- \dot{u}(t)\rangle -\langle A_0 u(t),\dot{x}(t)\rangle + \langle f(t),\dot{x}(t)\rangle\\
		&=\langle A_0 u(t)-f(t), z- \dot{u}(t)\rangle -\langle u(t),A_0\dot{x}(t)\rangle + \langle f(t),\dot{x}(t)\rangle.
		\end{array}
	\end{equation*} 
	Since $\dot{x}(t)\in{\rm ker\ }A_0$ and $f(t)\perp{\rm ker\ }A_0$, it follows that $\langle u(t),A_0\dot{x}(t)\rangle=0$ and $\langle f(t),\dot{x}(t)\rangle=0$. Therefore,
	\begin{equation}\label{v_estimate}
		\langle A_0 v(t)-f(t), z- \dot{v}(t)\rangle=\langle A_0 u(t)-f(t), z- \dot{u}(t)\rangle.
	\end{equation}
	As $u\in{\rm Sol}(P,u_0)$, the right hand side of~\eqref{v_estimate} is nonnegative. Hence, from~\eqref{v_estimate} we can deduce that $\langle A_0 v(t)-f(t), z- \dot{v}(t)\rangle\geq 0$. Since $z\in C(t)$ is can be chosen arbitrarily, we get
	$$\langle A_0 v(t)-f(t), z- \dot{v}(t)\rangle\geq 0\quad \forall z\in C(t)$$
	 for all $t\in\Omega$. Equivalently, $A_0 v(t)-f(t)\in-\mathcal{N}_{C(t)}(\dot{v}(t))$ for all $t\in\Omega$. It follows that $v$ is a solution of~\eqref{main}. So, we have proved that $u+\mathcal{K}\subset{\rm Sol}(P,u_0)$. Combining this with~\eqref{sol_subset_uK} yields ${\rm Sol}(P,u_0)= u+\mathcal{K}$. Hence, the desired convexity of ${\rm Sol}(P,u_0)$ follows from the convexity of the set $u+\mathcal{K}$.
\end{proof}

In connection with Theorems~\ref{convexity_thm_1}--\ref{convexity_thm_2}, we would like to raise the following open questions.

\textbf{Question 1.} \textit{We wonder if the assumptions $A_1=0$ and $f(t)\perp{\rm ker \ }A_0$ for all $t\in[0,T]$ could be dropped in the formulation of Theorem~\ref{convexity_thm_2}? In other words, does estimate~\eqref{sol_subset_uK} hold as an equality just under the assumption~{\rm\ref{closedconvexassumption}}?}

\textbf{Question 2.} \textit{Is there any example showing that, under the assumption~{\rm\ref{closedconvexassumption}}, the solution set of~\eqref{main} could be nonconvex?}

\section{Conclusions}

For sweeping processes with convex velocity constraints, we have obtained several new results on the solution sensitivity with respect to the initial value, as well as the closedness, the boundedness, and the convexity of the solution set. In addition, an outer estimate for the solution set is also given. Hoping for further in-depth studies on the solution set, we have proposed two open questions. 

\vskip 6mm
\noindent{\bf Acknowledgments}

\noindent  The author is grateful to Prof. Samir Adly, Prof. Nguyen Khoa Son, and Prof. Nguyen Dong Yen for encouragement and valuable discussions on the subject. This research was supported by the project \textit{"Sweeping processes with velocity constraints"} (Code: ICRTM03$\_$2021.05) of the International Center for Research and Postgraduate Training in  Mathematics (ICRTM) under the auspices of UNESCO of Institute of Mathematics, Vietnam Academy of Science and Technology.

\end{document}